\documentclass[11pt]{article}
\usepackage{mathtools}
\usepackage[T1]{fontenc}
\usepackage{amsfonts}
\usepackage{amsmath}
\usepackage{amssymb}
\usepackage{amsthm}
\usepackage{bbm}
\usepackage{bm}
\usepackage{mathrsfs}
\usepackage{color}
\usepackage{enumitem}
\usepackage[colorlinks=true, linkcolor=blue, citecolor=blue, urlcolor=blue]{hyperref}
\usepackage{tikz}
\usepackage[
  backend=biber,
  style=numeric,
  sorting=nyt,
  giveninits=true,
  maxbibnames=99,
  doi=true,
  isbn=false,
  url=false,
  eprint=true
]{biblatex}

\DeclareNameAlias{author}{family-given}
\DeclareNameAlias{editor}{family-given}
\DeclareFieldFormat[article]{title}{#1}
\DeclareFieldFormat[incollection]{title}{#1}
\DeclareFieldFormat[inproceedings]{title}{#1}
\DeclareFieldFormat[book]{title}{\mkbibemph{#1}}
\renewbibmacro{in:}{}

\AtEveryBibitem{%
  \clearfield{eprintclass}%
  \clearfield{primaryclass}%
}
\renewbibmacro*{eprint:arxiv}{%
  \printtext[eprint:arxiv]{%
    \printfield{eprinttype}%
    \setunit{\addcolon}%
    \printfield{eprint}}}

\DeclareMathOperator*{\argmax}{argmax}

\DeclareMathOperator{\supp}{supp}
\DeclareMathOperator{\dist}{dist}
\DeclareMathOperator{\Lip}{Lip}

\DeclareMathOperator*{\osc}{osc}
\DeclareMathOperator{\diam}{diam}

\newcommand{\RR}{\mathbb{R}}
\newcommand{\R}{\RR}

\newcommand{\eps}{\varepsilon}

\newcommand{\cL}{\mathcal{L}}

\newcommand{\X}{\mathcal{X}}
\newcommand{\Y}{\mathcal{Y}}

\newcommand{\Hil}{\mathcal{H}}
\newcommand{\1}{\mathbf{1}}
\newcommand{\DUAL}{{\rm QOT}^{\star}} 
\newcommand{\QOT}{{\rm QOT}}

\usepackage[top=30mm,bottom=30mm,left=31mm,right=31mm]{geometry}
\newcommand{\mykill}[1]{}

\DeclareRobustCommand{\MNg}[1]{{\color{gray}{#1}}}
\DeclareRobustCommand{\MNg}[1]{}
\usepackage[capitalize, noabbrev]{cleveref}
\crefname{equation}{}{} %

\theoremstyle{plain}
\newtheorem{theorem}{Theorem}[section]
\newtheorem{proposition}[theorem]{Proposition}

\newtheorem{corollary}[theorem]{Corollary}
\theoremstyle{definition}
\newtheorem{definition}[theorem]{Definition}
\newtheorem{remark}[theorem]{Remark}
\newtheorem{example}[theorem]{Example}
\newtheorem{assumption}[theorem]{Assumption}
\crefname{assumption}{Assumption}{Assumptions}
\Crefname{assumption}{Assumption}{Assumptions}
\theoremstyle{remark}
\AddToHook{env/theorem/begin}{\crefalias{section}{theorem}}
\AddToHook{env/proposition/begin}{\crefalias{theorem}{proposition}}
\AddToHook{env/lemma/begin}{\crefalias{theorem}{lemma}}
\AddToHook{env/corollary/begin}{\crefalias{theorem}{corollary}}
\AddToHook{env/definition/begin}{\crefalias{theorem}{definition}}
\AddToHook{env/remark/begin}{\crefalias{theorem}{remark}}
\AddToHook{env/example/begin}{\crefalias{theorem}{example}}
\AddToHook{env/assumption/begin}{\crefalias{theorem}{assumption}}
\crefname{theorem}{Theorem}{Theorems}
\crefname{proposition}{Proposition}{Propositions}
\crefname{lemma}{Lemma}{Lemmas}
\crefname{corollary}{Corollary}{Corollaries}
\crefname{definition}{Definition}{Definitions}
\crefname{remark}{Remark}{Remarks}
\crefname{example}{Example}{Examples}
\crefname{assumption}{Assumption}{Assumptions}

\renewenvironment{thebibliography}[1]{%
\begin{oldthebibliography}{#1}%
\setlength{\baselineskip}{.9em}
\linespread{1}
\small
\setlength{\parskip}{.40ex}%
\setlength{\itemsep}{.30em}%
}%
{%
\end{oldthebibliography}%
}
\usepackage{tocloft}

\begin{document}

\title{\vspace{-1em} Stability of Quadratically Regularized Optimal Transport}
\date{\today}
\author{  
  Alberto Gonz{\'a}lez-Sanz%
  \thanks{Department of Statistics, Columbia University, ag4855@columbia.edu.} \and  Marcel Nutz%
  \thanks{Departments of Mathematics and Statistics, Columbia University, mnutz@columbia.edu. Research supported by NSF Grants DMS-2106056, DMS-2407074.}%
  }

\maketitle

\vspace{-1.5em}
\begin{abstract}
Quadratically regularized optimal transport (QOT) is a sparse alternative to entropic optimal transport. We develop a quantitative stability theory for QOT under perturbations of the marginals, the transport cost function, and the regularization parameter. The centerpiece is an $L^\infty$-stability result for the dual potentials. Starting from an error bound in an $L^2$-space that varies with the marginals, we use a self-bound for the potentials to derive a local $L^\infty$-Lipschitz bound that is uniform over marginals. This bound also yields stability of the optimal coupling and of its support. In particular, we show that for the quadratic transport cost, the support of the optimal coupling is locally Lipschitz in Hausdorff distance under perturbations of the marginals. To the best of our knowledge, this is the first stability result for the optimal support in regularized optimal transport.
\end{abstract}

\vspace{1em}

{\small
\noindent \emph{Keywords}
Optimal Transport; Quadratic Regularization; Stability; Support

\noindent \emph{AMS 2020 Subject Classification}
49N10;  %
49N05;  %
90C25 %
}

\setcounter{tocdepth}{1}
\tableofcontents

\section{Introduction}

Optimal transport provides a variational framework for comparing probability
measures that has become ubiquitous across optimization, machine learning, statistics, and analysis. In applications, optimal transport is often regularized to reduce computational cost and improve statistical sample complexity. The best-studied regularization is entropic optimal transport (EOT), where the transport coupling is penalized by Kullback--Leibler
(KL) divergence; see, e.g.,
\cite{Cuturi.2013.Neurips,Peyre.Cuturi.2019.Book,Nutz.2021.LectureNotes}. EOT leads to Sinkhorn's algorithm and a smooth dual problem. However, the KL penalty forces the optimal coupling to have full support while unregularized optimal transport is typically sparse, and weak regularization can cause numerical instability
through exponentially large and small scalings~\cite{Schmitzer.19}. 
The most prominent alternative, first considered in \cite{Muzellec.2017.AAAI,blondel18quadratic,EssidSolomon.18}, is the quadratic regularization, or equivalently \(\chi^2\)-divergence. It produces sparse couplings, as observed in \cite{blondel18quadratic,EssidSolomon.18,Lorenz.2019,BayraktarEckstein.2025.BJ}
and recently analyzed theoretically in \cite{WieselXu.24,GonzalezSanzNutz2024.Scalar,Nutz.2024,gonzalezsanz2026sharplocalsparsityregularized}. It also replaces the exponential structure in the EOT dual by a quadratic, thereby avoiding exponential scaling. Recent work has shown that QOT enjoys parametric sample complexity~\cite{GonzalezSanzDelBarrioNutz.25} and linear convergence of several dual algorithms~\cite{GonzalezSanzNutzRiveros.25,GonzalezSanzNutzRiveros.26}, establishing QOT as a viable alternative to EOT. %

Given probability measures $P,Q$ on $\R^d$, a cost function $c(x,y)$, and a regularization parameter $\eps>0$, the QOT problem is %
\begin{equation*}\label{eq:qot-intro}
  \QOT_\eps(P,Q)
  :=
  \inf_{\pi\in\Pi(P,Q)}
  \int c(x,y)\,d\pi(x,y)
  +
  \frac{\eps}{2}
  \left\|
  \frac{d\pi}{d(P\otimes Q)}
  \right\|_{L^2(P\otimes Q)}^2,
\end{equation*}
where $\Pi(P,Q)$ denotes the set of couplings of $(P,Q)$. 
In practice, the most important example is the quadratic cost $c(x,y)=\frac12\|x-y\|^2$, but we shall develop the theory for a generic Lipschitz cost on a subset  $\X\times\Y\subset\R^d\times\R^d$. The dual variables of the QOT problem are the so-called potentials \(f,g\) solving the dual problem
\begin{equation*}\label{eq:qot-dual-intro}
  \sup_{(f,g)\in L^2(P)\times L^2(Q)}
  \int
  \left(
    f(x)+g(y)
    -\frac{1}{2\eps}\big(f(x)+g(y)-c(x,y)\big)_+^2
  \right)d(P\otimes Q)(x,y),
\end{equation*}
where \(t_+:=\max\{t,0\}\). Its first-order conditions of optimality are
\begin{equation}\label{eq:qot-foc-intro}
  \int \big(f(x)+g(y)-c(x,y)\big)_+\,dQ(y)=\eps,
  \qquad
  \int \big(f(x)+g(y)-c(x,y)\big)_+\,dP(x)=\eps
\end{equation}
for \(x\in\X\) and \(y\in\Y\), respectively. The potentials are unique up to translation under a mild connectedness assumption, so that their direct sum \(h=f\oplus g\) is uniquely determined. The primal optimizer $\pi$ is recovered through
\begin{equation}\label{eq:density-intro}
  \frac{d\pi}{d(P\otimes Q)}
  =
  \frac{1}{\eps}\sigma_+ \qquad \text{with the slack variable}\qquad \sigma:=h-c,
\end{equation}
and its support is
\begin{equation}\label{eq:opt-support-intro}
  \supp(\pi)
  =
  \overline{\{(x,y)\in\Omega_P\times\Omega_Q:\sigma(x,y)>0\}}.
\end{equation}
Thus \(h=f\oplus g\) determines the key objects of QOT, motivating us to view
\begin{equation}\label{eq:solution-map-intro}
  (P,Q,c,\eps)\mapsto h_{(P,Q,c,\eps)}
\end{equation}
as the solution map of our problem. 

The present paper provides a quantitative stability theory for this map. 
Our motivating question is the stability of the optimal support, about which, to the best of our knowledge, there are no prior results. In EOT, the optimal coupling always has full support (i.e., the same support as $P\otimes Q$), and hence support stability is not relevant. Since sparsity is the key difference between QOT and EOT, support stability is arguably the most interesting aspect among the stability properties. We answer this question by showing that the optimal support for quadratic cost is locally Lipschitz continuous (in Hausdorff distance) with respect to the marginals $(P,Q)$, with explicit constants detailed below. In view of~\eqref{eq:opt-support-intro}, our approach is to develop $L^\infty$-stability of $h=f\oplus g$ (and hence of $\sigma$), and to further ensure that $\sigma$ is not flat close to its zero-level set (which marks the boundary of the support).

Our analysis begins with a systematic study of the solution map~\eqref{eq:solution-map-intro}. For simplicity, we focus on perturbations of the marginals $(P,Q)$ in this introduction, and fix $c,\eps$. While our results in the main text cover joint variations of the data $(P,Q,c,\eps)$, varying $(c,\eps)$ is relatively straightforward because it does not change the function spaces. Our analysis can be contrasted with EOT, where the dual problem is strongly concave and the first-order condition for the potentials is the Schr\"odinger system 
\begin{equation}\label{eq:eot-schrodinger-intro}
\begin{aligned}
  f(x)
  &=
  -\eps\log\int
  \exp\left(\frac{g(y)-c(x,y)}{\eps}\right)\,dQ(y),\\
  g(y)
  &=
  -\eps\log\int
  \exp\left(\frac{f(x)-c(x,y)}{\eps}\right)\,dP(x).
\end{aligned}
\end{equation}
As shown by \cite{CarlierLaborde.20}, stability results can be derived by applying the implicit function theorem to this system. For QOT, the dual problem is not strongly concave and this approach fails. However, the recent work \cite{GonzalezSanzNutzRiveros.26} showed that a local PL inequality/error bound (see \cref{thm:dual-error-bound} below) holds in $L^2(P\otimes Q)$. It is well known (e.g., \cite[Section~4.4]{BonnansShapiro.00}) that a PL inequality implies a stability property. However, a direct application would only lead to a limit result: to work with a fixed $L^2$ space, one would have to restrict to marginals that are absolutely continuous with respect to fixed reference measures $P_0,Q_0$. By contrast, \cref{thm:l2-stability} and \cref{cor:symmetric-l2-stability} below provide local $L^2$-Lipschitz estimates where the input data are compared in the 1-Wasserstein metric. (Here ``local'' means that the marginals under consideration must have $W_1$-distance below an explicit threshold.)

$L^2$-stability serves as a stepping stone for the $L^\infty$-stability provided in \cref{thm:linfty-stability}  and \cref{cor:solution-map-lipschitz}. The upgrade from $L^2$ to $L^\infty$ is based on a self-bound for the potentials. Such a self-bound is  straightforward for EOT, where the Schr\"odinger system \eqref{eq:eot-schrodinger-intro} directly expresses $f,g$ as integrals of themselves. For QOT, the self-bound is nontrivial, but can be shown by a monotonicity argument and~\eqref{eq:qot-foc-intro} once the potentials are sufficiently close in $L^2$ (see Step~3 in the proof of \cref{thm:linfty-stability}). 

Stability of the primal objects follows from the dual estimates. Indeed, stability of the optimal density follows directly via~\eqref{eq:density-intro}. This density is with respect to a reference $P\otimes Q$ which is itself varying, but nevertheless allows us to conclude local Lipschitz continuity of the optimal coupling in both total variation and 1-Wasserstein distance. The primal stability results are stated in \cref{thm:primal-stability} and \cref{rem:primal-density-linfty-stability}.

We finally return to the optimal support. Potential stability alone cannot
control supports: when \(c\equiv0\), the optimal coupling is \(P\otimes Q\) and
\(h\equiv\eps\), so the potentials do not register changes in the marginal
supports. Our result therefore takes the Hausdorff displacement of the marginal supports as input, in addition to the quantities controlling the potentials. Under a nondegeneracy condition, \cref{thm:support-stability} then establishes local Lipschitz continuity of the optimal support in Hausdorff distance. The nondegeneracy condition, detailed in~\eqref{eq:exterior-nondegeneracy}, states that the slack $\sigma=h-c$ detaches linearly outside the optimal support. Intuitively, this means that the gradient of $\sigma$ is bounded away from zero at the boundary of the support (which is itself the zero-level set of $\sigma$). Whether this holds depends on the geometry of the cost $c$. For the quadratic
cost \(c(x,y)=\frac12\|x-y\|^2\), the slack $\sigma$ is concave in each coordinate, and we show  in \cref{pr:concavity-exterior-nondegeneracy} that this implies the nondegeneracy condition with a slope of at least $\eps/\diam(\supp P)$. As a result, the optimal support is indeed stable for the canonical cost (\cref{cor:quadratic-support-stability}), answering our initial question. On the other hand, the nondegeneracy condition is essential for support stability and can fail for certain costs: in \cref{ex:failure-support-stability}, we construct a setting where the marginals converge, the marginal supports do not move, and
the potentials converge uniformly, but the optimal supports fail to converge. 

\paragraph{Related literature.}
Classical optimal transport has a substantial stability theory, going back to \cite{cuesta1997optimal}, which proves weak convergence of optimal couplings under weak convergence of the marginal distributions. Quantitative estimates were given by \cite{Gigli.2011.PEMS}, which
studies the optimal map from a fixed source to targets moving along a Wasserstein curve. More recently, \cite{DelalandeMerigot.2023.DMJ} proves bi-H\"older stability of Brenier maps
under Wasserstein perturbations of the target with dimension-free exponents. See, e.g., \cite{GonzalezSanzSheng.2024.ArXiv,LetrouitMerigot.2024.ArXiv,Merigot.26} for more extensive references. 

\emph{Quadratic and general divergence regularization.}
The closest stability result for non-entropic regularization is
\cite{BayraktarEckstein.2025.BJ}, which studies divergence-regularized optimal transport for a broad class of divergences, including the
\(\chi^2\)-divergence underlying QOT. The authors prove $\frac{1}{2p}$-H\"older stability of
the optimal couplings in $p$-Wasserstein distance under perturbations of the marginals, and also derive sample-complexity bounds for empirical marginals. Previously, \cite{EcksteinNutz.22} showed Lipschitz continuity of the optimal value. Both works are based on the shadow technique and the data processing inequality, which do not control the potentials or the supports. More recently, \cite{GonzalezSanzDelBarrioNutz.25} proves parametric sample complexity and central limit theorems for QOT, but does not address deterministic stability or the optimal supports. In a different direction, \cite{Eckstein.Nutz.2023} studies convergence for vanishing regularization parameter $\eps\to0$, focusing on the optimal value rather than the optimizers. 

Several works analyze the structure of QOT and the support of its optimal coupling. Duality is developed in
\cite{Lorenz.2019} and \cite{Nutz.2024}; the latter also establishes a qualitative sparsity result and uniqueness of the potentials. For quadratic transport cost, sparsity is studied quantitatively in
\cite{WieselXu.24,GonzalezSanzNutz2024.Scalar,gonzalezsanz2026sharplocalsparsityregularized}. 
These works describe the geometry of the support for small \(\eps\) and
fixed marginals; they do not investigate stability. On the algorithmic side, \cite{GonzalezSanzNutzRiveros.25}
proves linear convergence of gradient descent for the QOT dual. Its proof technique is foundational for the subsequent work
\cite{GonzalezSanzNutzRiveros.26} proving a local error bound and Polyak--\L{}ojasiewicz inequality for the QOT dual in $L^2(P\otimes Q)$, where the marginals $P,Q$ are fixed. The present paper uses the error bound of \cite{GonzalezSanzNutzRiveros.26} as a starting point, but the main
developments here are different: the underlying Hilbert space \(L^2(P\otimes Q)\) moves with the marginals $P,Q$, and support stability in the Hausdorff sense requires $L^\infty$-control of the potentials.

\emph{Entropic regularization.}
For entropic optimal transport, stability is much better developed. The differential
approach of \cite{CarlierLaborde.20} proves well-posedness and smooth dependence
of the Schr\"odinger system in an \(L^\infty\) setting via the implicit function theorem. Refining this approach, \cite{CarlierChizatLaborde.24} show that the potentials are \(C^k\)-Lipschitz from the \(L^2\)-Wasserstein
space of marginals when the transport cost is \(C^{k+1}\).  In a more general setting, \cite{GhosalNutzBernton.22} use a geometric approach based on cyclical invariance to prove qualitative stability of the primal EOT optimizers for weak convergence with respect to perturbations of the marginals and cost. Analyzing Sinkhorn's algorithm in a bounded setting, \cite{DeligiannidisDeBortoliDoucet.24} show that its iterates are $L^\infty$-Lipschitz uniformly over the iteration
number, and hence that the potentials are $L^\infty$-Lipschitz, as functions of the marginals in 1-Wasserstein distance. In an unbounded setting, \cite{EcksteinNutz.22} establish Lipschitz continuity of the entropic value and H\"older continuity of the optimal coupling in Wasserstein distance using the shadow construction. 
Stability of entropic Brenier maps is treated in
\cite{DivolNilesWeedPooladian.24}, which obtains sharp Lipschitz constants under target perturbations and improves the dependence on the regularization parameter in the semi-discrete regime. Recent contributions include \cite{YaoKimSchiebinger.26}, which proves quantitative stability for many-marginal Schr\"odinger bridges. For the reasons explained above, the techniques used in EOT do not directly extend to QOT. Moreover, the question of support stability does not arise in EOT.

\paragraph{Organization.}
\Cref{sec:setting-background} details the setting and collects basic facts about QOT. \Cref{sec:l2-stability} establishes the \(L^2\)-stability of the potentials, whereas \(L^\infty\)-stability is shown in \cref{sec:Linfty-stability}. \Cref{sec:primal-stability}
derives stability of the optimal densities and couplings. \Cref{sec:support-stability}
concludes with the Hausdorff stability of the optimal support. All proofs are gathered in Appendix~\ref{sec:proofs}.

\section{Setting and background}\label{sec:setting-background}

 We fix two subsets
\(\X,\Y\subset\R^d\) and denote by \(\Omega_\nu\) the topological
support of a measure \(\nu\). All first
marginal probability measures considered in this paper are assumed to have
finite second moments and support contained in \(\X\), and all second
marginal probability measures are assumed to have finite second moments and
support contained in \(\Y\). The main purpose of the sets $\X,\Y$ is to localize the problem so
that, e.g., the quadratic cost becomes Lipschitz. If $c$ is Lipschitz on $\R^d\times\R^d$, one may simply take \(\X=\Y=\R^d\). Throughout, we use the Euclidean norms on $\R^d$ and $\R^d\times\R^d$ unless
otherwise stated.%

\begin{assumption}[Cost] \label{ass:cost}
The cost \(c:\X\times \Y\to \R\) is Lipschitz with constant \(L>0\).
\end{assumption}

The quadratically regularized optimal transport (QOT) problem with
regularization parameter \(\eps>0\) is
\begin{equation}\label{eq:qot-primal}
  \QOT_\eps(P,Q):=  \inf_{\pi\in \Pi(P,Q)} \int c(x,y)\, d\pi(x,y) +\frac{\eps }{2}\left\| \frac{d \pi}{ d(P\otimes Q)}\right\|^2_{L^{2}(P\otimes Q)}
\end{equation}
with the convention that the last term is \(+\infty\) if
\(\pi \not\ll P\otimes Q\). The dual problem is
\begin{equation}\label{eq:qot-dual}
     \DUAL_\eps(P,Q)=\sup_{(f,g)\in L^2(P)\times L^2(Q)} \Phi(f\oplus g),
\end{equation}
where \(f\oplus g(x,y):=f(x)+g(y)\) denotes the direct sum and the dual
objective function \(\Phi\) is
\begin{align}\label{eq:dual-objective}
     \Phi(f \oplus g) := \int \left(f(x) + g(y) -\frac{1}{2\eps} \left(f(x)+ g(y)- c(x,y)\right)_+^{2}\right) d(  P \otimes   Q )(x,y).
\end{align}

The next proposition summarizes several standard properties. For a function
\(w\) defined on a set \(A\), we write
\(\|w\|_{L^\infty(A)}:=\sup_{z\in A}|w(z)|\).

\begin{proposition}\label{prop:qot-basic-properties}
    Let \(P,Q\) be probability measures on \(\R^d\) with finite second moments and let \(c\) satisfy \cref{ass:cost}. Then:
    \begin{enumerate}
    \item
    The strong duality
      \(\QOT_\eps(P,Q)=\DUAL_\eps(P,Q)\)
    holds.
    \item
    The dual problem~\eqref{eq:qot-dual} admits an optimizer \((f,g)\in L^2(P)\times L^2(Q)\). If \(\Omega_P\) is connected,
    the functions \((f,g)\) are unique in \(L^1(P)\times L^1(Q)\) up to translation, in the sense that fixing one optimizer \((f,g)\), the set of all optimizers is \(\{(f+a,g-a): a\in\R\}\).

    \item We can choose versions \(f:\X\to\R\) and \(g:\Y\to\R\) that are \(L\)-Lipschitz and satisfy the first-order condition
    \begin{equation}
        \label{eq:potential-foc}
        \begin{cases}
             \int (f(x)+ g(y)-c(x,y))_+\, dQ(y)=\eps \quad \text{for all }x\in\X,\\
             \int (f(x)+ g(y)-c(x,y))_+\, dP(x)=\eps \quad \text{for all }y\in\Y.
        \end{cases}
    \end{equation}
    In the following, we always choose such versions, and call them
    potentials. If \(\Omega_P\) is connected, they are again unique up to
    translation.

    \item If \(c\) is bounded on \(\X\times\Y\), then so are
    \((f,g)\). More precisely, their oscillations are bounded as
    \(\osc_{\X}(f),\osc_{\Y}(g)\le \osc_{\X\times\Y}(c)\leq 2\|c\|_{L^\infty(\X\times\Y)}\), and
    \begin{equation}\label{eq:potential-sum-bounds}
        -5\|c\|_{L^\infty(\X\times\Y)} + \eps \le
        f(x)+g(y)
        \le
        5\|c\|_{L^\infty(\X\times\Y)}+\eps, \qquad (x,y)\in\X\times\Y.
    \end{equation}
    \item
    The primal problem~\eqref{eq:qot-primal} has a unique solution
    \(\pi\in\Pi(P,Q)\). It is related to the dual by
    \begin{equation}\label{eq:optimal-density}
    \frac{d\pi}{d(P \otimes Q)}(x,y) = \frac{1}{\eps} \big( f(x) + g(y) - c(x,y) \big)_+ \qquad P \otimes Q\text{-a.s.}
    \end{equation}
    and its support is 
    \begin{equation}
    \label{eq:optimal-support}
    \supp(\pi)
    =
    \overline{\{(x,y)\in \Omega_P\times\Omega_Q:f(x)+g(y)-c(x,y)>0\}}.
    \end{equation}
\end{enumerate}
\end{proposition}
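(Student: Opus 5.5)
The plan is to treat items 1, 2 and 5 as standard consequences of the convex-duality theory for divergence-regularized transport, as in \cite{Nutz.2024,Lorenz.2019}. The new work goes into the Lipschitz versions in item 3 and the explicit bounds in item 4.

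\emph{Items 1, 2, 5.} For item 1, weak duality follows from the pointwise inequality
\[
s\,t-\tfrac{\eps}{2}s^2\le \tfrac1{2\eps}t_+^2\qquad (s\ge 0),
\]
applied with $s=d\pi/d(P\otimes Q)$ and $t=f\oplus g-c$, and then integrated using the marginal constraints. Equality and existence of a dual optimizer in $L^2(P)\times L^2(Q)$ I would take from \cite{Nutz.2024}. That paper uses finite second moments together with the Lipschitz growth of $c$, which gives $c\in L^2(P\otimes Q)$. Given dual optimizers, I would then define $\pi$ by \eqref{eq:optimal-density}. The first-order conditions (the directional derivatives of $\Phi$ in the directions $\varphi(x)$ and $\psi(y)$) say exactly that $\pi$ has marginals $P,Q$. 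Pointwise equality in the inequality above then yields primal optimality. Uniqueness of $\pi$ comes from strict convexity of the primal. Hence $(f\oplus g-c)_+$ is unique $P\otimes Q$-a.s. Uniqueness of $(f,g)$ up to translation when $\Omega_P$ is connected is the result of \cite{Nutz.2024}. Heuristically, on the open set $\{\sigma>0\}$ the sum $f\oplus g$ is determined, and connectedness propagates the additive constant. Item 5's support formula then follows from continuity of $\sigma$ (by item 3): the density is positive exactly on an open set, and we intersect with $\Omega_P\times\Omega_Q$ and take the closure.

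\emph{Item 3 (main step).} Start from optimizers $(f_0,g_0)$. The map $a\mapsto\int(a+g_0(y)-c(x,y))_+\,dQ(y)$ is continuous and nondecreasing, tends to $0$ as $a\to-\infty$ and to $\infty$ as $a\to\infty$, and is strictly increasing wherever it is positive. So for each $x\in\X$ there is a unique $f(x)$ solving the first equation in \eqref{eq:potential-foc}. By the first-order condition, $f=f_0$ $P$-a.s. Then define $g(y)$ on all of $\Y$ the same way from $f$, which gives $g=g_0$ $Q$-a.s.

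It remains to check that the first equation still holds for every $x$ after $g$ is redefined. It does: the integral is against $Q$, and $g=g_0$ $Q$-a.s., so the equation is unchanged.

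For the Lipschitz property, let $|x-x'|\le\delta$. Then $c(x',y)\le c(x,y)+L\delta$ for all $y$. Hence the value $f(x)+L\delta$ satisfies
\[
\int\big(f(x)+L\delta+g-c(x',\cdot)\big)_+\,dQ\ge\eps,
\]
and by monotonicity $f(x')\le f(x)+L\delta$. Symmetry gives the reverse inequality, so $f$ is $L$-Lipschitz, and likewise $g$. The one delicate point is strict monotonicity where the integral equals $\eps>0$; this ensures the solution $a$ is unique, so the comparison is valid.

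\emph{Item 4.} Suppose $\osc_\X f>\osc c$. Then there are $x,x'$ with $f(x)-f(x')>\osc c$, and hence for every $y$,
\[
f(x)+g(y)-c(x,y)>f(x')+g(y)-c(x',y).
\]
The right-hand side has positive part integrating to $\eps$ against $Q$, so it is positive on a set of positive $Q$-mass. On that set the positive part on the left is strictly larger, while it is at least as large everywhere. Therefore $\int(f(x)+g-c(x,\cdot))_+\,dQ>\eps$, a contradiction. The same argument bounds $\osc_\Y g$.

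For \eqref{eq:potential-sum-bounds}, fix $(x,y)$ and use \eqref{eq:potential-foc}. We have
\[
\eps\le\sup_{y'}\big(f(x)+g(y')-c(x,y')\big)_+ ,
\]
so some $y'$ has $f(x)+g(y')\ge\eps-\|c\|_\infty$. Combined with $\osc g\le2\|c\|_\infty$, this gives the lower bound $f(x)+g(y)\ge\eps-3\|c\|_\infty$. For the upper bound, the integral equals $\eps$, so some $y'$ in the support of $Q$ has $f(x)+g(y')-c(x,y')\le\eps$. Again using the oscillation bound on $g$, this gives $f(x)+g(y)\le\eps+3\|c\|_\infty$. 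The bounds with constant $5$ then follow with room to spare.

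The main obstacle is item 3: the versions must be defined on all of $\X,\Y$, not only on the supports, and the pointwise equation has to be kept consistent after one potential is redefined.
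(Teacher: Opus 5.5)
Your proposal is correct, and its overall structure matches the paper's. Duality, existence and the density formula are imported from \cite{Nutz.2024}. The everywhere-defined potentials come from unique solvability of $\int (a+g(y)-c(x,y))_+\,dQ(y)=\eps$ in $a$, and the support formula comes from continuity of $\sigma$.

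The differences lie in what is proved versus cited. The paper takes $L$-Lipschitz versions satisfying \eqref{eq:potential-foc} on $\Omega_P,\Omega_Q$ from \cite{Nutz.2024}, and only extends them to $\X,\Y$ by monotonicity and the intermediate value theorem. It cites \cite[Lemma~2.5]{Nutz.2024} for the oscillation bounds, and \cite[Lemma~3.1]{GonzalezSanzNutzRiveros.25} rather than \cite{Nutz.2024} for uniqueness. You instead construct the versions on all of $\X,\Y$ in one pass from arbitrary a.s.\ optimizers, and prove the Lipschitz and oscillation bounds by direct comparison. This makes items 3 and 4 self-contained. Note that the paper obtains only $L^1$ optimizers from \cite{Nutz.2024}. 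Your construction (Lipschitz versions plus finite second moments) is exactly what upgrades them to $L^2$, so the argument should be ordered that way rather than citing $L^2$ existence directly.

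For \eqref{eq:potential-sum-bounds}, the paper bounds the oscillation of $f\oplus g$ by $4\|c\|_{L^\infty(\X\times\Y)}$ and integrates the first-order condition against $P\otimes Q$. You pick $y'\in\Omega_Q$ from the first-order condition at a fixed $x$ and use only $\osc_{\Y}(g)$. This gives the sharper constant $3$ in place of $5$.

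Two points you leave implicit are immediate with your own tools.
\begin{itemize}
\item Pointwise uniqueness of the everywhere-defined versions on $\X,\Y$ follows from unique solvability of $F_x(a)=\eps$ once a.s.\ uniqueness is known.
\item Your uniqueness heuristic is already a complete proof. Uniqueness of $\pi$ and continuity give $\sigma_+=\tilde\sigma_+$ on $\Omega_P\times\Omega_Q$. Every $x_0\in\Omega_P$ has some $y_0\in\Omega_Q$ with $\sigma(x_0,y_0)>0$, so $f-\tilde f=\tilde g(y_0)-g(y_0)$ near $x_0$. Connectedness of $\Omega_P$ makes this difference constant, and the second first-order condition transfers the constant to $g$.
\end{itemize}
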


Let \(\cL_{d}\) denote the \(d\)-dimensional Lebesgue measure and \(B_r(x)\)
the open ball of center \(x\) and radius \(r>0\). The following additional
conditions on the reference marginals \(P,Q\) are used in the error bound (\cref{thm:dual-error-bound}) below.

\begin{assumption}[Reference marginals]\label{ass:reference-marginals}
(a) The first marginal \(P\) has convex, compact support \(\Omega_P\) and
admits a density \(\rho:=dP/d\cL_{d}\) that is bounded away from zero and
infinity on \(\Omega_P\), i.e., there exist constants
\(0<\lambda_P\leq\Lambda_P<\infty\) such that
\[
 \lambda_P\leq \rho(x) \leq \Lambda_P \quad \text{for all } x\in \Omega_P.
\]
(b) The second marginal \(Q\) has compact support \(\Omega_Q\).
\end{assumption}

\Cref{ass:reference-marginals} implies the following bounds on ball measures.

\begin{remark}[Lower bounds for ball measures]\label{rem:ball-measure-lower-bounds}
(a) Since \(\Omega_P\) is compact and convex, it satisfies a uniform interior
cone condition: for every \(x\in \Omega_P\), there exists a convex cone
\(\mathcal{C}_x\) with vertex \(x\), angle \(\theta>0\) and height \(h>0\)
such that \(\mathcal{C}_x\subset \Omega_P\), where \(\theta\) and \(h\) are
independent of \(x\) (e.g., \cite[p.\,12]{Grisvard.85}). Hence,
\cref{ass:reference-marginals} implies that there exists
\(\delta_P\in (0,1]\), depending only on \(\Omega_P\) and \(\lambda_P\),
such that
\[
P(B_r(x)) \geq  \delta_P\, \min(r^d, 1) \quad \text{for all }r>0 \text{ and }x\in \Omega_P.
\]
(b) Since \(\Omega_Q\) is compact, lower semicontinuity of
\(y\mapsto Q(B_r(y))\) implies
\[
\inf_{y\in \Omega_Q }Q(B_r(y))>0 \quad \text{for all }r>0.
\]
\end{remark}

Next, consider the linear subspace
\[
  \Hil = \big\{u\oplus v: (u,v)\in L^{2}(P)\times L^{2}(Q)\big\} \subset  L^{2}(P\otimes Q),
\]
which is closed by the argument in \cite[p.\,370]{RuschendorfThomsen.93}.
Hence, \(\Hil\) is a Hilbert space with the induced inner product
\(\langle \cdot,\cdot\rangle_{\Hil} = \langle \cdot,\cdot\rangle_{L^{2}(P\otimes Q)}\).
In the following, we consider the dual objective \(\Phi\) of~\eqref{eq:dual-objective}
as an operator
\[
\Phi: \Hil \to \RR
\]
and denote by \({\rm D}\Phi(h)\) its \(\Hil\)-gradient. For any
\(h,w\in\Hil\), the action of \({\rm D}\Phi(h)\) on \(w\) is
\begin{equation}\label{eq:dual-gradient-action}
    \langle {\rm D}\Phi(h),w\rangle_{L^{2}(P\otimes Q)}
    =
    \int w\,R_h\,d(P\otimes Q),
\end{equation}
where
\[
R_h:=1-\frac{1}{\eps}(h-c)_+ \in L^2(P\otimes Q).
\]
In particular,
\begin{equation}\label{eq:dual-gradient-norm}
    \|{\rm D}\Phi(h)\|_{L^{2}(P \otimes Q)} = \sup_{w\in\Hil:\, \|w\|_{L^{2}(P\otimes Q)}\leq1} \int w\,R_h\,d(P\otimes Q).
\end{equation}
This norm governs the right-hand side in the following local error bound for
the dual problem, established in
\cite[Theorem~3.1]{GonzalezSanzNutzRiveros.26}.

\begin{theorem}[Error bound]\label{thm:dual-error-bound}
    Let \((P,Q)\) satisfy \cref{ass:reference-marginals}, let \(c\) satisfy
\cref{ass:cost}, let \(\eps>0\), and let \((f,g)\) be the
associated potentials. Setting \(h=f\oplus g\in \Hil\), the error bound
    \begin{equation*}
    \|\tilde{h}-h\|_{L^2(P\otimes Q)}
    \leq
    \gamma_\eps
    \max\left(\|\tilde{h}-h\|_{L^\infty(P\otimes Q)},\eps\right)
    \|{\rm D}\Phi(\tilde{h})\|_{L^2(P\otimes Q)}
    \end{equation*}
    holds for all \(\tilde{h}\in \Hil\), where
    \[
    \gamma_\eps= 16 \left(\delta_P^{-1}\max\left( \frac{8L}{\eps},1\right)^d\right) \frac{\Lambda_P^2}{\lambda_P^2}
    \frac{\bigl(\lceil 8L\,{\rm diam}(\Omega_P)/\eps \rceil\bigr)^{d+2}}{\inf_{y\in\Omega_Q} Q\bigl(B_{\frac{\eps}{8L}}(y)\bigr)}.
    \]
\end{theorem}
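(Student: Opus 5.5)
The plan is to test the gradient against the error direction itself and then run a Poincaré-type argument that exploits the direct-sum structure of \(\Hil\). Write \(\delta:=\tilde h-h\in\Hil\), \(\sigma:=h-c\), \(\tilde\sigma:=\tilde h-c\), and \(M:=\max(\|\delta\|_{L^\infty(P\otimes Q)},\eps)\).

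\emph{Step 1: monotonicity.} By the first-order condition~\eqref{eq:potential-foc}, \(\int R_h\,dQ=0\) for every \(x\) and \(\int R_h\,dP=0\) for every \(y\). Hence \(\int w R_h\,d(P\otimes Q)=0\) for every \(w=u\oplus v\in\Hil\), by Fubini. Taking \(w=-\delta\) in~\eqref{eq:dual-gradient-action} gives
\[
-\langle {\rm D}\Phi(\tilde h),\delta\rangle=\frac1\eps\int \delta\,\big(\tilde\sigma_+-\sigma_+\big)\,d(P\otimes Q).
\]
The integrand is pointwise nonnegative, since \(t\mapsto t_+\) is monotone. On the set \(S:=\{\sigma\ge \eps/2\}\) we get a quadratic lower bound: if \(\tilde\sigma\ge0\) the integrand equals \(\delta^2\), and if \(\tilde\sigma<0\) it equals \(\sigma\,|\delta|\ge \tfrac\eps2|\delta|\ge \tfrac{\eps}{2M}\delta^2\). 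Combining this with Cauchy--Schwarz yields
\[
\int_S \delta^2\,d(P\otimes Q)\le 2M\,\|{\rm D}\Phi(\tilde h)\|_{L^2(P\otimes Q)}\,\|\delta\|_{L^2(P\otimes Q)} .
\]
It then suffices to prove a Poincaré-type inequality \(\|\delta\|^2_{L^2(P\otimes Q)}\le C\int_S\delta^2\) for all \(\delta\in\Hil\), with \(C\) matching \(\gamma_\eps/2\) up to the stated constants.

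\emph{Step 2: geometry of \(S\).} For each \(x\), the first-order condition and \(Q(\Omega_Q)=1\) give some \(y_x\in\Omega_Q\) with \(\sigma(x,y_x)\ge\eps\). Since \(f\), \(g\) and \(c\) are \(L\)-Lipschitz, \(\sigma\) is Lipschitz with constant at most \(2L\) in each variable. Hence \(S\) contains the product of balls \(B_{\eps/(8L)}(x)\times B_{\eps/(8L)}(y_x)\), which accounts for the radius \(\eps/(8L)\) in \(\gamma_\eps\). The \(Q\)-mass of such a ball is at least \(\inf_{y\in\Omega_Q}Q(B_{\eps/(8L)}(y))\), and the \(P\)-mass is at least \(\delta_P\min((\eps/(8L))^d,1)\) by \cref{rem:ball-measure-lower-bounds}.

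\emph{Step 3: chaining (main obstacle).} Write \(\delta=u\oplus v\). On a product block \(B\times B'\subset S\), the quantity \(\int_{B\times B'}(u(x)+v(y))^2\) controls the oscillation of \(u\) on \(B\) and of \(v\) on \(B'\) about opposite constants, via the variance decomposition of a sum of independent terms. I would cover the convex set \(\Omega_P\) by about \(\lceil 8L\,\diam(\Omega_P)/\eps\rceil^{d}\) cells of size \(\eps/(8L)\). Adjacent cells are linked by paths of at most \(\lceil 8L\diam(\Omega_P)/\eps\rceil\) steps, and convexity is used to keep these paths inside \(\Omega_P\). Propagating along a chain forces \(u\) to be close to a single constant \(a\) on all of \(\Omega_P\) in \(L^2(P)\), and \(v\) close to \(-a\) on the union of the partner balls. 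Coverage of all of \(\Omega_Q\) then comes from the second first-order condition, by the same argument with the roles of \(x\) and \(y\) exchanged.

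Each chaining step costs the density ratio \(\Lambda_P/\lambda_P\), used twice when passing between Lebesgue measure and \(P\), and a factor from the chain length. Cauchy--Schwarz along paths of length \(N\), summed over the cells, produces the factor \(N^{d+2}\). Finally \(\|u\oplus v\|^2\le 2\|u-a\|^2+2\|v+a\|^2\) closes the Poincaré inequality with a constant of the form of \(\gamma_\eps\). I expect the bookkeeping of this chaining argument to be the hardest part: the partner points \(y_x\) vary with \(x\), so the overlaps on the \(Y\)-side must be arranged through the Lipschitz dependence on \(x\).
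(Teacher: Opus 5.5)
This theorem is not proved in the present paper; it is quoted from \cite[Theorem~3.1]{GonzalezSanzNutzRiveros.26}, so your argument has to stand on its own. Your Steps~1 and~2 are correct. They reduce the claim exactly to the Poincar\'e-type inequality $\|\delta\|_{L^2(P\otimes Q)}^2\le \frac{\gamma_\eps}{2}\int_S\delta^2\,d(P\otimes Q)$ for all $\delta\in\Hil$, with $S=\{\sigma\ge\eps/2\}$. The gap is Step~3. It carries the entire content of the theorem but is only a plan, and the cell-based plan does not produce $\gamma_\eps$ and needs repair to work at all. Concretely:
(i) The only blocks you have inside $S$ are $B_r(x)\times B_r(y_x)$ with $r=\eps/(8L)$, whose $x$-sections have diameter $2r$. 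Two adjacent cubes of side $r$ have a union of diameter $\sqrt{d+3}\,r>2r$ for $d\ge2$, so neighbouring cells are not linked by a single block. Refining the grid fixes this only at the price of $d$-dependent factors that $\gamma_\eps$ does not contain.
(ii) Cells cut by $\partial\Omega_P$ can have arbitrarily small $P$-mass, and each step $|\bar u_C-\bar u_{C\cup C'}|^2\le P(C)^{-1}\int_{C\cup C'}(u-\bar u_{C\cup C'})^2\,dP$ divides by that mass.
(iii) Your closing bound $\|u\oplus v\|^2\le 2\|u-a\|^2+2\|v+a\|^2$ loses a factor~$2$. With $a:=\int u\,dP$ one has the identity $\|u\oplus v\|^2_{L^2(P\otimes Q)}=\|u-a\|^2_{L^2(P)}+\|v+a\|^2_{L^2(Q)}$ by \eqref{eq:centered-decomp-norm}.
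Conversely, your worry about overlaps of partner balls on the $Y$-side is unnecessary. Also, ``the same argument with $x$ and $y$ exchanged'' cannot mean a second chaining, since $\Omega_Q$ is an arbitrary compact set with no connectedness or density; it has to be a one-step transfer from $u$.

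The missing idea is to chain along segments rather than cells. Let $N:=\lceil 8L\diam(\Omega_P)/\eps\rceil$ and, for $x,x'\in\Omega_P$, let $z_k:=x+\frac kN(x'-x)$, which lies in $\Omega_P$ by convexity. Cauchy--Schwarz gives $(u(x)-u(x'))^2\le N\sum_{k<N}(u(z_k)-u(z_{k+1}))^2$. The linear map $(x,x')\mapsto(z_k,z_{k+1})$ has Jacobian $N^{-d}$ and sends $\Omega_P^2$ into $\{(z,z')\in\Omega_P^2:\|z-z'\|\le r\}$. Hence it pushes $P\otimes P$ to a measure with density at most $N^d\Lambda_P^2/\lambda_P^2$ relative to $P\otimes P$; this is where $N^{d+2}$ and $\Lambda_P^2/\lambda_P^2$ come from. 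For $\|z-z'\|\le r$ and $y'\in B_r(y_z)$, both $(z,y')$ and $(z',y')$ lie in $S$. So averaging $(u(z)-u(z'))^2\le2(u(z)+v(y'))^2+2(u(z')+v(y'))^2$ over $y'\in B_r(y_z)$ uses only the partner of $z$, and yields
\[
\|u-a\|^2_{L^2(P)}\le K\int_S\delta^2\,d(P\otimes Q),
\qquad
K:=\frac{2\Lambda_P^2N^{d+2}}{\lambda_P^2\,q},
\qquad
q:=\inf_{y\in\Omega_Q}Q\big(B_r(y)\big).
\]
For $v$, take $x^y\in\Omega_P$ with $\sigma(x^y,y)\ge\eps$ from the second first-order condition; then $B_r(x^y)\times\{y\}\subset S$. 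Averaging $(v(y)+a)^2\le2(u(x')+v(y))^2+2(u(x')-a)^2$ over $x'\in B_r(x^y)$ gives $\|v+a\|^2_{L^2(Q)}\le\frac2p\big(\int_S\delta^2\,d(P\otimes Q)+\|u-a\|^2_{L^2(P)}\big)$ with $p:=\delta_P\min(r^d,1)$. This is the origin of the factor $\delta_P^{-1}\max(8L/\eps,1)^d$. Since $K\ge2$ and $p\le1$, the identity in (iii) gives $\|\delta\|^2\le\frac{4K}{p}\int_S\delta^2\,d(P\otimes Q)$. Your Step~1 then yields the bound with constant $8K/p$, which is exactly $\gamma_\eps$.
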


\section{Stability of the potentials in \(L^2\)}
\label{sec:l2-stability}

Our first result reformulates the local error bound into an $L^2$-estimate for nearby data. The nontrivial part is (ii), establishing that the locality condition---which is in the stronger $L^\infty$-norm---is automatically satisfied when the data are close enough. 

\begin{theorem}[$L^2$-Lipschitz stability]
\label{thm:l2-stability}
Let \(\eps,\eps'>0\), let \(c,c'\) satisfy
\cref{ass:cost}, let \((P,Q)\) satisfy
\cref{ass:reference-marginals}, and let \((P',Q')\) be any marginals with
finite second moments. Let \(f,g\) be the potentials (unique up to
translation) associated with \((P,Q,c,\eps)\), and let \(f',g'\) be (any)
potentials associated with \((P',Q',c',\eps')\). Set \(h:=f\oplus g\) and
\(h':=f'\oplus g'\), and
\begin{align*}
\Delta&:=2L\,\Big(W_1(P,P')^2+W_1(Q,Q')^2\Big)^{1/2}+\|c-c'\|_{L^2(P\otimes Q)}+|\eps-\eps'|.
\end{align*}
Then, the following hold.
\begin{enumerate}
\item[(i)]
Local error-bound: We have
\begin{equation}\label{eq:l2-local-error-bound}
\|h-h'\|_{L^2(P\otimes Q)}
\le
\frac{\gamma_\eps}{\eps}
\max\big(\|h-h'\|_{L^\infty(\Omega_P\times\Omega_Q)},\eps\big)
\,\Delta.
\end{equation}

\item[(ii)]
Modulus of continuity into \(L^\infty\): For \(\delta>0\), define
\begin{equation}\label{eq:vartheta-delta}
\vartheta_\delta
:=
\delta_P \min\Big\{\Big(\frac{\delta}{8L}\Big)^d,1\Big\}
\inf_{y\in\Omega_Q} Q\Big(B_{\delta/(8L)}(y)\Big).
\end{equation}
Then
\begin{equation}\label{eq:l2-linfty-modulus}
\Delta
<
\frac{\min(\delta,\eps)\sqrt{\vartheta_\delta}}{2\gamma_\eps}
\qquad \implies \qquad
\|h-h'\|_{L^\infty(\Omega_P\times\Omega_Q)}<\delta.
\end{equation}
\item[(iii)] Local Lipschitz continuity in \(L^2\): We have
\begin{equation*}
\|h-h'\|_{L^2(P\otimes Q)}
\le \gamma_\eps\,\Delta \qquad \text{whenever }\Delta<\frac{\eps\sqrt{\vartheta_\eps}}{2\gamma_\eps}.
\end{equation*}
\end{enumerate}
\end{theorem}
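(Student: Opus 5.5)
The plan for (i) is to apply \cref{thm:dual-error-bound} with $\tilde h:=h'$. Since $f',g'$ are $L$-Lipschitz on $\X,\Y$ and $\Omega_P,\Omega_Q$ are compact, $h'$ restricted to $\Omega_P\times\Omega_Q$ is bounded and lies in $\Hil$. The $L^\infty(P\otimes Q)$ norm is dominated by the sup over $\Omega_P\times\Omega_Q$. So it suffices to show $\|{\rm D}\Phi(h')\|_{L^2(P\otimes Q)}\le \Delta/\eps$.

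For a test function $w=u\oplus v\in\Hil$ with $\|w\|\le 1$, Fubini gives
\[
\int wR_{h'}\,d(P\otimes Q)=\int u\,r_1\,dP+\int v\,r_2\,dQ,
\]
where $r_1(x):=1-\frac1\eps\int(h'-c)_+(x,y)\,dQ(y)$ and $r_2$ is defined symmetrically. Into $r_1$ I insert the first-order condition of the primed problem, $\int(h'-c')_+(x,y)\,dQ'(y)=\eps'$, which yields
\[
\eps\, r_1(x)=(\eps-\eps')+\Big[\int(h'-c')_+\,dQ'-\int(h'-c')_+\,dQ\Big]+\int\big[(h'-c')_+-(h'-c)_+\big]\,dQ .
\]
For fixed $x$, the map $y\mapsto (h'-c')_+(x,y)$ is $2L$-Lipschitz, since $g'$ and $c'$ are each $L$-Lipschitz in $y$. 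Kantorovich--Rubinstein therefore bounds the bracket by $2L\,W_1(Q,Q')$. The last term is at most $\int|c-c'|(x,y)\,dQ(y)$.

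Minkowski then gives $\|r_1\|_{L^2(P)}\le\eps^{-1}\big(|\eps-\eps'|+2LW_1(Q,Q')+\|c-c'\|_{L^2(P\otimes Q)}\big)$, and analogously for $r_2$ with $W_1(P,P')$. To pass to $\|w\|$, I normalize $u$ to have $P$-mean zero, so that $\|w\|^2=\|u\|^2+\|v\|^2$. I expect the bookkeeping for the common terms $|\eps-\eps'|$ and $\|c-c'\|$ to be the fiddliest point. The natural fix is to use that the $\Hil$-projection of $R_{h'}$ is $r_1\oplus r_2$ minus a constant, and then apply Cauchy--Schwarz so that these two terms appear only once. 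The $W_1$ terms combine into the square-root sum in $\Delta$. Combining this with \cref{thm:dual-error-bound} yields \eqref{eq:l2-local-error-bound}.

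For (ii), set $M:=\|h-h'\|_{L^\infty(\Omega_P\times\Omega_Q)}$; it is attained at some $(x_0,y_0)\in\Omega_P\times\Omega_Q$ by continuity and compactness. Suppose for contradiction that $M\ge\delta$. The function $h-h'$ is $4L$-Lipschitz for the norm $|x-x'|+|y-y'|$. Hence on $B_{M/(8L)}(x_0)\times B_{M/(8L)}(y_0)$ we have $|h-h'|\ge M/2$. By \cref{rem:ball-measure-lower-bounds}, this set has $P\otimes Q$-mass at least $\vartheta_M\ge\vartheta_\delta$, because the ball masses are monotone in the radius. It follows that $\|h-h'\|_{L^2(P\otimes Q)}\ge \frac M2\sqrt{\vartheta_\delta}$, and inserting this into (i) gives $\frac M2\sqrt{\vartheta_\delta}\le\frac{\gamma_\eps}{\eps}\max(M,\eps)\Delta$.

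I then split into two cases:
\begin{itemize}
\item If $M\ge\eps$, this gives $\Delta\ge \eps\sqrt{\vartheta_\delta}/(2\gamma_\eps)$.
\item If $\delta\le M<\eps$, it gives $\Delta\ge M\sqrt{\vartheta_\delta}/(2\gamma_\eps)\ge\delta\sqrt{\vartheta_\delta}/(2\gamma_\eps)$.
\end{itemize}
Both cases contradict the hypothesis in \eqref{eq:l2-linfty-modulus}.

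For (iii), take $\delta=\eps$ in (ii). This gives $M<\eps$, so the max in (i) equals $\eps$, and the bound becomes $\|h-h'\|_{L^2}\le\gamma_\eps\Delta$. The main obstacle is step (ii): one must obtain a lower bound for the $L^2$ norm from the sup norm using only the Lipschitz regularity of the potentials and the ball-mass bounds. One must also take care that the relevant balls are centered at points of $\Omega_P\times\Omega_Q$, which is where \cref{rem:ball-measure-lower-bounds} applies.
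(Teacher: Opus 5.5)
Your proposal is correct and follows the paper's proof essentially step for step. The paper also obtains $\|{\rm D}\Phi(h')\|_{L^2(P\otimes Q)}\le\Delta/\eps$ from the primed first-order conditions and Kantorovich--Rubinstein, then bootstraps from $L^2$ to $L^\infty$ via Lipschitz continuity and ball masses, and takes $\delta=\eps$ for (iii). Your use of the $\Hil$-projection $r_1\oplus r_2-\bar r$ of $R_{h'}$ keeps the common terms from being counted twice, just as the paper's balanced decomposition does by regrouping them into $\frac{\eps-\eps'}{\eps}\bar w$ and $\frac{1}{\eps}\int w\,[(h'-c')_+-(h'-c)_+]\,d(P\otimes Q)$.

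One small slip in (ii): $h-h'$ is in fact $2L$-Lipschitz for $\|x-x'\|+\|y-y'\|$, equivalently $4L$-Lipschitz for the maximum of the two. It is this sharper constant, not $4L$ for the sum norm, that gives $|h-h'|\ge M/2$ on $B_{M/(8L)}(x_0)\times B_{M/(8L)}(y_0)$.
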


The estimate in \cref{thm:l2-stability} is asymmetric---both the norm and
the constants are tied to the reference datum \((P,Q,c,\eps)\). The next
definition introduces a class on which the relevant constants are uniform,
yielding a symmetric corollary.

\begin{definition}[Class \(\mathfrak D\) of data]
\label{def:admissible-class}
Fix constants
\[
\underline\eps,D,L\in(0,\infty),\qquad
0<\underline\lambda\le \overline\Lambda<\infty,\qquad
0<\underline\delta,\underline q\le 1.
\]
Let \(\mathfrak D=\mathfrak D(\underline\eps,D,L,\underline\lambda,\overline\Lambda,\underline\delta,\underline q)\) be the class of all quadruples \((P,Q,c,\eps)\) such that:
\begin{enumerate}
\item[(a)] \(\eps\ge \underline\eps\);
\item[(b)] \(c:\X\times\Y\to\R\) is \(L\)-Lipschitz;
\item[(c)] \(\Omega_P\) is compact and convex, and \(\diam(\Omega_P)\le D\);
\item[(d)] \(P\) admits a density \(\rho=dP/d\cL_d\) satisfying \(\underline\lambda\le \rho(x)\le \overline\Lambda\) for all \(x\in\Omega_P\);
\item[(e)] \(P\) satisfies \(P(B_r(x))\ge \underline\delta\,\min(r^d,1)\) for all \(x\in\Omega_P\), \(r>0\);
\item[(f)] \(\Omega_Q\) is compact and \(\inf_{y\in\Omega_Q} Q\bigl(B_{\underline\eps/(8L)}(y)\bigr)\ge \underline q\).
\end{enumerate}
\end{definition}

The preceding theorem becomes symmetric on this class, after applying it with each datum as the reference.

\begin{corollary}[Uniform \(L^2\)-stability]
\label{cor:symmetric-l2-stability}
Let \(\mathfrak D\) be as in \cref{def:admissible-class}, and define
\begin{align}
\overline\gamma
&:=
16\left(
\underline\delta^{-1}\max\Big(\frac{8L}{\underline\eps},1\Big)^d
\right)
\frac{\overline\Lambda^2}{\underline\lambda^2}
\frac{\bigl(\lceil 8L\,D/\underline\eps\rceil\bigr)^{d+2}}
{\underline q}
\nonumber\\
\underline\vartheta
&:=
\underline\delta
\min\Big\{\Big(\frac{\underline\eps}{8L}\Big)^d,1\Big\}\underline q
\label{eq:uniform-l2-constants}
\nonumber\\
\overline\eta
&:=
\frac{\underline\eps\sqrt{\underline\vartheta}}{2\overline\gamma}.
\nonumber
\end{align}
For \((P,Q,c,\eps)\in\mathfrak D\), let
$
h_{(P,Q,c,\eps)}:=f\oplus g,
$
where \((f,g)\) are the associated potentials. Given
\[
(P,Q,c,\eps),\ (P',Q',c',\eps')\in\mathfrak D,
\]
abbreviate
$h:=h_{(P,Q,c,\eps)}$ and $h':=h_{(P',Q',c',\eps')}$, and set 
\begin{align*}
\Delta_W
&:=
\Big(W_1(P,P')^2+W_1(Q,Q')^2\Big)^{1/2}
\\
\Delta
&:=
2L\,\Delta_W+\|c-c'\|_{L^2(P\otimes Q)}+|\eps-\eps'|
\\
\Delta'
&:=
2L\,\Delta_W+\|c-c'\|_{L^2(P'\otimes Q')}+|\eps-\eps'|
\\
\bar\mu
&:=
\frac{P\otimes Q+P'\otimes Q'}{2}
\\
\bar\Delta
&:=
2L\,\Delta_W+\|c-c'\|_{L^2(\bar\mu)}+|\eps-\eps'|.
\end{align*}
If
\begin{equation*}
\label{eq:symmetric-l2-smallness}
\max\{\Delta,\Delta'\}<\overline\eta,
\end{equation*}
then
\begin{align}
\label{eq:symmetric-l2-bound-unprimed}
\|h-h'\|_{L^2(P\otimes Q)}
&\le \overline\gamma\,\Delta \\
\label{eq:symmetric-l2-bound-primed}
\|h-h'\|_{L^2(P'\otimes Q')}
&\le \overline\gamma\,\Delta' \\
\label{eq:symmetric-l2-bound-mixture}
\|h-h'\|_{L^2(\bar\mu)}
&\le \overline\gamma\,\bar\Delta.
\end{align}
\end{corollary}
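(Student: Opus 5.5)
The plan is to apply \cref{thm:l2-stability}(iii) twice: once with $(P,Q,c,\eps)$ as the reference datum and once with $(P',Q',c',\eps')$ as the reference datum. The mixture bound then follows by an elementary averaging argument. Since $\Omega_P$ and $\Omega_{P'}$ are convex, hence connected, \cref{prop:qot-basic-properties} shows that the potentials are unique up to translation. So $h$ and $h'$ are well defined, and the choice of translation does not matter.

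\emph{Step 1: uniform constants.} Fix $(P,Q,c,\eps)\in\mathfrak D$. It satisfies \cref{ass:reference-marginals} and \cref{ass:cost} with Lipschitz constant $L$. The only role of $\delta_P$ in \cref{thm:dual-error-bound} and \cref{thm:l2-stability} is the ball lower bound of \cref{rem:ball-measure-lower-bounds}(a). By (e), we may therefore take $\delta_P=\underline\delta$. I then check monotonicity of each factor of $\gamma_\eps$:
\begin{itemize}
\item $\max(8L/\eps,1)^d\le\max(8L/\underline\eps,1)^d$;
\item $\Lambda_P^2/\lambda_P^2\le\overline\Lambda^2/\underline\lambda^2$;
\item $\lceil 8L\diam(\Omega_P)/\eps\rceil\le\lceil 8LD/\underline\eps\rceil$;
\item $\inf_{y\in\Omega_Q}Q(B_{\eps/(8L)}(y))\ge\inf_{y\in\Omega_Q}Q(B_{\underline\eps/(8L)}(y))\ge\underline q$, because balls grow with $\eps\ge\underline\eps$.
\end{itemize}
Hence $\gamma_\eps\le\overline\gamma$. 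In the same way $\vartheta_\eps\ge\underline\vartheta$, since $\min\{(\eps/8L)^d,1\}\ge\min\{(\underline\eps/8L)^d,1\}$ and the ball measure is at least $\underline q$. Together with $\eps\ge\underline\eps$, this gives
\[
\frac{\eps\sqrt{\vartheta_\eps}}{2\gamma_\eps}\ge\overline\eta .
\]
The same bounds hold for the primed datum. I expect this bookkeeping to be the main obstacle. In particular, one must justify that $\delta_P$ may be replaced by $\underline\delta$, and observe that every constant is monotone in the correct direction in $\eps$.

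\emph{Step 2: the two one-sided bounds.} The quantity $\Delta$ coincides with the $\Delta$ of \cref{thm:l2-stability} for reference $(P,Q,c,\eps)$. Since $\Delta<\overline\eta\le\eps\sqrt{\vartheta_\eps}/(2\gamma_\eps)$, part (iii) of that theorem yields
\[
\|h-h'\|_{L^2(P\otimes Q)}\le\gamma_\eps\Delta\le\overline\gamma\Delta,
\]
which is \eqref{eq:symmetric-l2-bound-unprimed}. Now swap the roles of the two data. $\Delta_W$ is symmetric, and the cost term becomes $\|c-c'\|_{L^2(P'\otimes Q')}$, so the relevant quantity is exactly $\Delta'$. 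The same argument gives \eqref{eq:symmetric-l2-bound-primed}.

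\emph{Step 3: the mixture.} Write $a:=2L\Delta_W+|\eps-\eps'|$, $b:=\|c-c'\|_{L^2(P\otimes Q)}$ and $b':=\|c-c'\|_{L^2(P'\otimes Q')}$. Then $\bar\Delta=a+\sqrt{(b^2+b'^2)/2}$. Squaring and averaging the two bounds from Step 2 gives
\[
\|h-h'\|_{L^2(\bar\mu)}^2=\tfrac12\big(\|h-h'\|_{L^2(P\otimes Q)}^2+\|h-h'\|_{L^2(P'\otimes Q')}^2\big)\le\overline\gamma^2\,\tfrac12\big((a+b)^2+(a+b')^2\big).
\]
Expanding the right-hand factor,
\[
\tfrac12\big((a+b)^2+(a+b')^2\big)=a^2+a(b+b')+\tfrac12(b^2+b'^2)\le\bar\Delta^2 .
\]
The inequality uses $(b+b')/2\le\sqrt{(b^2+b'^2)/2}$. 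Taking square roots gives \eqref{eq:symmetric-l2-bound-mixture}.
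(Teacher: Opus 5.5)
Your proposal is correct and follows the paper's route: apply \cref{thm:l2-stability}(iii) with each datum in turn as the reference, using $\gamma_\eps\le\overline\gamma$ and $\vartheta_\eps\ge\underline\vartheta$ on $\mathfrak D$ to place $\overline\eta$ below each threshold, then average the squared bounds over $\bar\mu$ and invoke $\frac12\bigl(\Delta^2+(\Delta')^2\bigr)\le\bar\Delta^2$. Your Step~1 bookkeeping, including the choice $\delta_P=\underline\delta$, and your expansion in Step~3 just spell out what the paper calls elementary.
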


\section{Stability of the potentials in \(L^\infty\)}
\label{sec:Linfty-stability}

Our next result upgrades the \(L^2(P\otimes Q)\)-estimate of \cref{thm:l2-stability} to an \(L^\infty(\X\times\Y)\)-estimate. This is important in two respects. First, it removes the dependence on the marginals in the definition of the norm measuring the error, thus allowing for a statement that is uniform across a class of marginals. Second, \(L^\infty\)-control will be crucial to control the optimal support in Hausdorff sense in \cref{sec:support-stability}.

\begin{theorem}[\(L^\infty\)-Lipschitz stability]
\label{thm:linfty-stability}
Let $\eps,\eps'>0$, let $c,c'$ satisfy
\cref{ass:cost}, let \((P,Q)\) satisfy \cref{ass:reference-marginals}, and let $(P',Q')$ be any marginals with finite second moments. Let $f,g$ be the potentials (unique up to translation) associated with \((P,Q,c,\eps)\), and let $f',g'$ be (any) potentials associated with \((P',Q',c',\eps')\). Set $h:=f\oplus g$ and  $h':=f'\oplus g'$, let \(\gamma_\eps\) be as in \cref{thm:dual-error-bound}, and let \(\vartheta_\eps\) be as in~\eqref{eq:vartheta-delta}.
Define
\begin{align}
\Delta_* 
&:= d_{\mathfrak D}\big((P,Q,c,\eps),(P',Q',c',\eps')\big) \nonumber\\
&:=2L\,\Big(W_1(P,P')^2+W_1(Q,Q')^2\Big)^{1/2}+\|c-c'\|_{L^\infty(\X\times\Y)}+|\eps-\eps'|
\label{eq:distance-D}\\
\widehat q_\eps
&:=\inf_{y\in\Omega_Q}Q\Big(B_{\eps/(4L)}(y)\Big)
\nonumber\\
\widehat\kappa_\eps
&:=\delta_P\min\Big\{\Big(\frac{\eps}{4L}\Big)^d,1\Big\}
\nonumber\\
\widehat\eta_\eps
&:=
\min\left\{
\frac{\eps\sqrt{\vartheta_\eps}}{2\gamma_\eps},\,
\frac{\eps\,\widehat q_\eps}{2(1+\gamma_\eps)},\,
\frac{\eps\,\widehat\kappa_\eps}{2(1+\gamma_\eps)}
\right\}.
\label{eq:linfty-local-threshold}
\end{align}
Then \(\widehat q_\eps,\widehat\kappa_\eps,\widehat\eta_\eps>0\). If
\begin{equation}\label{eq:linfty-smallness}
\Delta_*<\widehat\eta_\eps,
\end{equation}
then, writing $a:=\int (g'-g)\,dQ$, we have
\begin{align}
\|f-f'-a\|_{L^\infty(\X)}
&\le
(1+\gamma_\eps)\widehat q_\eps^{-1}\,\Delta_*,
\label{eq:linfty-f-bound}
\\
\|g-g'+a\|_{L^\infty(\Y)}
&\le
(1+\gamma_\eps)\widehat\kappa_\eps^{-1}\,\Delta_*,
\label{eq:linfty-g-bound}
\end{align}
and consequently
\begin{equation}\label{eq:linfty-h-bound}
\|h-h'\|_{L^\infty(\X\times \Y)}
\le
(1+\gamma_\eps)\bigl(\widehat q_\eps^{-1}+\widehat\kappa_\eps^{-1}\bigr)\Delta_*.
\end{equation}
\end{theorem}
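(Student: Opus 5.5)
Since \(\Delta\le\Delta_*\) (the \(L^2(P\otimes Q)\)-norm of \(c-c'\) is dominated by its sup-norm), the first threshold in~\eqref{eq:linfty-local-threshold} lets us invoke \cref{thm:l2-stability}(ii) with \(\delta=\eps\) and (iii). This gives \(\|h-h'\|_{L^\infty(\Omega_P\times\Omega_Q)}<\eps\) and \(\|h-h'\|_{L^2(P\otimes Q)}\le\gamma_\eps\Delta_*\). Positivity of \(\widehat q_\eps,\widehat\kappa_\eps\) follows from \cref{rem:ball-measure-lower-bounds}, and hence \(\widehat\eta_\eps>0\). The remaining work is to upgrade this \(L^2\)-bound, which lives on the product, to separate sup-norm bounds on \(f-f'-a\) over \(\X\) and \(g-g'+a\) over \(\Y\).

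\emph{Step 1 (an integrated bound on \(f\)).} Write \(\phi:=f-f'-a\) and \(\psi:=g-g'+a\), so that \(h-h'=\phi\oplus\psi\) and \(\int\psi\,dQ=0\). By orthogonality of the mean-zero part, \(\|\phi\|_{L^2(P)}\le\|\phi\oplus\psi\|_{L^2(P\otimes Q)}\), and hence \(\|\phi\|_{L^1(P)}\le\gamma_\eps\Delta_*\). The analogous bound for \(\psi\) in \(L^2(Q)\) holds as well.

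\emph{Step 2 (pointwise self-bound via monotonicity).} This is the main step, and the one I expect to be the obstacle. Fix \(x\in\X\). Compare the first-order conditions~\eqref{eq:potential-foc}: \(\int(f(x)+g-c(x,\cdot))_+dQ=\eps\) and \(\int(f'(x)+g'-c'(x,\cdot))_+dQ'=\eps'\). First transfer the second integral from \(Q'\) to \(Q\). The integrand is \(2L\)-Lipschitz in \(y\) (since \(g'\) and \(c'\) are \(L\)-Lipschitz and \(t\mapsto t_+\) is \(1\)-Lipschitz), so the transfer costs at most \(2L\,W_1(Q,Q')\). Replacing \(c'\) by \(c\) costs at most \(\|c-c'\|_\infty\), and \(|\eps-\eps'|\) accounts for the remaining change. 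Altogether, with \(s:=\phi(x)\),
\[
\Bigl|\int\bigl(f(x)+g-c(x,\cdot)\bigr)_+dQ-\int\bigl(f(x)+g-c(x,\cdot)-s-\psi\bigr)_+dQ\Bigr|\le\Delta_* .
\]
Now suppose \(s>0\). The map \(t\mapsto t_+\) is monotone, and the difference of the two integrands is at least \(s-\psi(y)\) on the set where the slack \(\sigma(x,\cdot)=f(x)+g-c(x,\cdot)\) exceeds \(s-\psi\). Integrating therefore gives roughly
\[
s\,Q\bigl(\{\sigma(x,\cdot)\ge s+\psi\}\bigr)\le\Delta_*+\int|\psi|\,dQ .
\]
To make this usable I need a ball of \(Q\)-mass at least \(\widehat q_\eps\) on which \(\sigma(x,\cdot)\ge\eps/2\). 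The FOC says that \(\sigma(x,\cdot)_+\) has \(Q\)-average \(\eps\), so there is some \(y_0\in\Omega_Q\) with \(\sigma(x,y_0)\ge\eps\). Since \(\sigma(x,\cdot)\) is \(2L\)-Lipschitz, it stays \(\ge\eps/2\) on \(B_{\eps/(4L)}(y_0)\), a ball of \(Q\)-mass at least \(\widehat q_\eps\). On this ball, \(|\psi|\) is not yet controlled pointwise. I will handle this by working with the integrated form against \(\psi\) and using \(\|\psi\|_{L^1(Q)}\le\gamma_\eps\Delta_*\), together with the a priori bound \(|s|,|\psi|<\eps\) from the \(L^\infty\) conclusion above. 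This yields \(s\,\widehat q_\eps\le(1+\gamma_\eps)\Delta_*\). The case \(s<0\) is symmetric, with the roles of the two potentials swapped.

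The smallness conditions \(\Delta_*<\eps\widehat q_\eps/(2(1+\gamma_\eps))\) and the analogous one for \(\widehat\kappa_\eps\) are precisely what guarantee \(s<\eps/2\). This keeps the ball argument consistent, since the shifted slack still stays positive on the ball. The bookkeeping of which region is being used (where \(\sigma\ge s+\psi\)) is the delicate point. If the direct route fails, I would first prove a crude bound \(\|\phi\|_\infty<\eps/2\) and then rerun the argument.

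\emph{Step 3 (the bound on \(g\)).} Repeat Step 2 with the roles of \(x\) and \(y\) exchanged. Here \(P\)-balls of radius \(\eps/(4L)\) have mass at least \(\widehat\kappa_\eps\) by \cref{rem:ball-measure-lower-bounds}(a), and the \(L^1(P)\)-bound on \(\phi\) replaces the one on \(\psi\). This gives~\eqref{eq:linfty-g-bound}. Finally,~\eqref{eq:linfty-h-bound} follows from the triangle inequality, since \(h-h'=\phi\oplus\psi\).
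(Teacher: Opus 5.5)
Your architecture matches the paper's. It uses the $L^2$ bound from \cref{thm:l2-stability}(iii) and the normalization $\int\psi\,dQ=0$, which gives $\|\phi\|_{L^2(P)},\|\psi\|_{L^2(Q)}\le\gamma_\eps\Delta_*$. It compares the two first-order conditions at a fixed $x$; your $2L\,W_1(Q,Q')$, $\|c-c'\|_{L^\infty(\X\times\Y)}$, $|\eps-\eps'|$ bookkeeping is right. It uses a ball $B:=B_{\eps/(4L)}(y_0)$ of $Q$-mass at least $\widehat q_\eps$ on which $\sigma(x,\cdot)\ge\eps/2$, and the threshold that forces $|s|<\eps/2$.

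But Step~2 does not close as you set it up. Your lower bound on the difference of integrands (which is $s+\psi(y)$, not $s-\psi(y)$) only holds on the set $\{\sigma(x,\cdot)\ge s+\psi\}$. Placing $B$ inside that set needs pointwise control of $\psi$ on $B$, which you do not have. The a priori bound you invoke does not supply it. \cref{thm:l2-stability}(ii) only controls the sum $\phi(x)+\psi(y)$ on $\Omega_P\times\Omega_Q$. With your normalization this gives $|\phi|<\eps$ on $\Omega_P$ and $|\psi|<2\eps$ on $\Omega_Q$. That says nothing about $s=\phi(x)$ for $x\in\X\setminus\Omega_P$, where the theorem also asserts a bound. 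And $|\psi|<2\eps$ is too weak to ensure $s+\psi\le\eps/2$ on $B$.

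The missing move, which is what the paper does, is to remove $\psi$ before using monotonicity, so that no $\psi$-dependent set appears. Since $t\mapsto t_+$ is $1$-Lipschitz, deleting $-\psi$ from your second integrand costs at most $\|\psi\|_{L^1(Q)}\le\gamma_\eps\Delta_*$. Hence the nondecreasing one-variable function $F_x(t):=\int(t+g-c(x,\cdot))_+\,dQ$ satisfies $|F_x(f(x)-s)-F_x(f(x))|\le(1+\gamma_\eps)\Delta_*<\eps\,\widehat q_\eps/2$.

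For $t\ge f(x)-\eps/2$ the integrand is nonnegative on $B$, hence equal to $t+g-c(x,\cdot)$ there. So $F_x(t_2)-F_x(t_1)\ge\widehat q_\eps(t_2-t_1)$ whenever $f(x)-\eps/2\le t_1\le t_2$. Monotonicity of $F_x$ then rules out $|s|\ge\eps/2$ for every $x\in\X$. This is your crude bound, and it needs nothing from \cref{thm:l2-stability}(ii). Inside the window, the same slope estimate gives $\widehat q_\eps|s|\le(1+\gamma_\eps)\Delta_*$.

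Both signs of $s$ are handled by this same $F_x$. Do not swap the roles of the two data for $s<0$: $(P',Q')$ has no ball-mass lower bound, and $\psi$ is controlled only in $L^2(Q)$. With this repair your Step~3 goes through as stated, using $G_y(t):=\int(f+t-c(\cdot,y))_+\,dP$, the constant $\widehat\kappa_\eps$, and $\|\phi\|_{L^1(P)}\le\gamma_\eps\Delta_*$.
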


By making the constants uniform over a class \(\mathfrak D\) of data, we immediately obtain a uniform version of \cref{thm:linfty-stability}.

\begin{corollary}[Uniform \(L^\infty\)-stability]
\label{cor:solution-map-lipschitz}
Let \(\mathfrak D=\mathfrak D(\underline\eps,D,L,\underline\lambda,\overline\Lambda,\underline\delta,\underline q)\) be as in \cref{def:admissible-class}. For \((P,Q,c,\eps)\in\mathfrak D\), let
$h_{(P,Q,c,\eps)}=f\oplus g$, where $(f,g)$ are the associated potentials. Then the map\footnote{If $c$ is bounded on $\X\times\Y$, then this is a map into $L^\infty(\X\times\Y)$. In general, the difference $h_{(P,Q,c,\eps)}-h_{(P',Q',c',\eps')}$ in~\eqref{eq:uniform-linfty-lipschitz} is in $L^\infty(\X\times\Y)$, even though the two terms need not be bounded individually.}
\[
\mathfrak D\ni (P,Q,c,\eps)\longmapsto h_{(P,Q,c,\eps)}
\]
is uniformly locally Lipschitz with respect to
\(d_{\mathfrak D}\), defined in~\eqref{eq:distance-D}. More precisely, let
\begin{align}
\overline\gamma
&:=
16\left(
\underline\delta^{-1}\max\Big(\frac{8L}{\underline\eps},1\Big)^d
\right)
\frac{\overline\Lambda^2}{\underline\lambda^2}
\frac{\bigl(\lceil 8L\,D/\underline\eps\rceil\bigr)^{d+2}}
{\underline q}
\nonumber\\
\underline\vartheta
&:=
\underline\delta
\min\Big\{\Big(\frac{\underline\eps}{8L}\Big)^d,1\Big\}\underline q
\nonumber\\
\underline{\widehat\kappa}
&:=
\underline\delta
\min\Big\{\Big(\frac{\underline\eps}{4L}\Big)^d,1\Big\}
\nonumber\\
\overline\eta_*
&:=
\min\left\{
\frac{\underline\eps\sqrt{\underline\vartheta}}{2\overline\gamma},\,
\frac{\underline\eps\,\underline q}{2(1+\overline\gamma)},\,
\frac{\underline\eps\,\underline{\widehat\kappa}}{2(1+\overline\gamma)}
\right\},
\nonumber\\
\overline C
&:=
(1+\overline\gamma)\bigl(\underline q^{-1}+\underline{\widehat\kappa}^{-1}\bigr)
\label{eq:overline-C}.
\end{align}
Then, for any pair \((P,Q,c,\eps),(P',Q',c',\eps')\in\mathfrak D\) satisfying
\[
d_{\mathfrak D}\big((P,Q,c,\eps),(P',Q',c',\eps')\big)<\overline\eta_*
\]
we have
\begin{equation}\label{eq:uniform-linfty-lipschitz}
\|h_{(P,Q,c,\eps)}-h_{(P',Q',c',\eps')}\|_{L^\infty(\X\times\Y)}
\le
\overline C\,
d_{\mathfrak D}\big((P,Q,c,\eps),(P',Q',c',\eps')\big).
\end{equation}
\end{corollary}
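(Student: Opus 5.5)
This corollary follows from \cref{thm:linfty-stability} applied with $(P,Q,c,\eps)$ as the reference datum. The only work is to check that every datum-dependent constant there is dominated by its uniform counterpart over $\mathfrak D$.

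\emph{Step 1: applicability.} Since $(P,Q,c,\eps)\in\mathfrak D$, \cref{ass:reference-marginals} holds by (c), (d) and (f), and \cref{ass:cost} holds by (b). By (e), the lower bound of \cref{rem:ball-measure-lower-bounds}(a) holds with $\delta_P=\underline\delta$. The statement of \cref{thm:dual-error-bound} only uses $\delta_P$ through such a ball lower bound, so we may take $\delta_P:=\underline\delta$ in all constants.

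\emph{Step 2: monotone comparison of constants.} We bound each ingredient using the constraints defining $\mathfrak D$.
\begin{itemize}
\item For $\gamma_\eps$: we have $\eps\ge\underline\eps$, $\diam(\Omega_P)\le D$ and $\Lambda_P/\lambda_P\le\overline\Lambda/\underline\lambda$. Moreover $\inf_y Q(B_{\eps/(8L)}(y))\ge\inf_y Q(B_{\underline\eps/(8L)}(y))\ge\underline q$, because balls grow with the radius. Each factor of $\gamma_\eps$ is nonincreasing in $\eps$ and in these lower bounds, so $\gamma_\eps\le\overline\gamma$.
\item For $\vartheta_\eps$: similarly $\vartheta_\eps\ge\underline\vartheta$.
\item For $\widehat q_\eps$: we have $\widehat q_\eps=\inf_y Q(B_{\eps/(4L)}(y))\ge\inf_y Q(B_{\underline\eps/(8L)}(y))\ge\underline q$.
\item For $\widehat\kappa_\eps$: we have $\widehat\kappa_\eps\ge\underline{\widehat\kappa}$.
\end{itemize}
Combining these, each of the three terms in $\widehat\eta_\eps$ dominates the corresponding term of $\overline\eta_*$, using $\eps\ge\underline\eps$ in the numerators and $\gamma_\eps\le\overline\gamma$ in the denominators. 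Hence $\widehat\eta_\eps\ge\overline\eta_*$. In the same way, $(1+\gamma_\eps)(\widehat q_\eps^{-1}+\widehat\kappa_\eps^{-1})\le\overline C$.

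\emph{Step 3: conclude.} The quantity $\Delta_*$ of \cref{thm:linfty-stability} is exactly $d_{\mathfrak D}$. So $d_{\mathfrak D}<\overline\eta_*\le\widehat\eta_\eps$ verifies~\eqref{eq:linfty-smallness}, and~\eqref{eq:linfty-h-bound} gives~\eqref{eq:uniform-linfty-lipschitz} with constant $\overline C$. Note that $h'$ is well defined in $\mathfrak D$ regardless of which translate of the potentials is chosen, since $\Omega_{P'}$ is convex and hence connected, and $h'$ is invariant under translation. The second datum only needs finite second moments, which it has.

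\emph{Main point of care.} The only delicate step is the monotonicity bookkeeping in $\gamma_\eps$ and the use of $\delta_P=\underline\delta$. One must check two things:
\begin{itemize}
\item the ceiling factor $\lceil 8L\,\diam(\Omega_P)/\eps\rceil$ is monotone in both $\diam(\Omega_P)$ and $\eps$;
\item the radius-$\eps/(4L)$ infimum in $\widehat q_\eps$ is controlled by the radius-$\underline\eps/(8L)$ assumption (f), because $\eps/(4L)\ge\underline\eps/(8L)$.
\end{itemize}
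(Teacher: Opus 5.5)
Your proposal is correct and follows the paper's route: apply \cref{thm:linfty-stability} with $(P,Q,c,\eps)$ as the reference datum and $\lambda_P=\underline\lambda$, $\Lambda_P=\overline\Lambda$, $\delta_P=\underline\delta$, then use $\eps\ge\underline\eps$, $\diam(\Omega_P)\le D$ and condition (f) to get $\gamma_\eps\le\overline\gamma$, $\vartheta_\eps\ge\underline\vartheta$, $\widehat q_\eps\ge\underline q$ and $\widehat\kappa_\eps\ge\underline{\widehat\kappa}$, which give $\widehat\eta_\eps\ge\overline\eta_*$ and a Lipschitz constant at most $\overline C$. The paper compresses this into one line (``follows directly from the theorem and the definitions of the constants''), and your monotonicity bookkeeping is exactly the detail that line leaves implicit.
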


\section{Stability of the optimal coupling}
\label{sec:primal-stability}

We now turn to the stability of the primal optimizers. Throughout this section, we work on the admissible class \(\mathfrak D=\mathfrak D(\underline\eps,D,L,\underline\lambda,\overline\Lambda,\underline\delta,\underline q)\) from \cref{def:admissible-class}. Given probability measures \(\mu,\nu\) on a measurable space, we write
\[
\|\mu-\nu\|_{\mathrm{TV}}
:=
\sup_{A} |\mu(A)-\nu(A)|
\]
for the total variation distance. The next theorem provides stability estimates for the optimal coupling as well as its density with respect to the product $P\otimes Q$ of the marginals (where $P\otimes Q$ is itself varying).

\begin{theorem}[Stability of primal optimizers]
\label{thm:primal-stability}
Let \(\mathfrak  D=\mathfrak D(\underline\eps,D,L,\underline\lambda,\overline\Lambda,\underline\delta,\underline q)\) be as in \cref{def:admissible-class}. Let \((P,Q,c,\eps),\ (P',Q',c',\eps')\in \mathfrak D\), let \(h,h'\) be the corresponding direct sums of potentials, let \(\pi\in\Pi(P,Q),\pi'\in\Pi(P',Q')\) be the corresponding optimal couplings, and denote their densities as 
\[
\zeta:=\frac{1}{\eps}(h-c)_+,
\qquad
\zeta':=\frac{1}{\eps'}(h'-c')_+.
\]
Set
\[
\mu:=P\otimes Q,
\qquad
\mu':=P'\otimes Q',
\qquad
\bar\mu:=\frac{\mu+\mu'}{2},
\]
\[
\Delta_W
:=
\Big(W_1(P,P')^2+W_1(Q,Q')^2\Big)^{1/2},
\]
\[
\Delta_{\mathrm{TV}}
:=
\|P-P'\|_{\mathrm{TV}}+\|Q-Q'\|_{\mathrm{TV}},
\]
and
\begin{align*}
\Delta
&:=
2L\,\Delta_W+\|c-c'\|_{L^2(\mu)}+|\eps-\eps'|,
\\
\Delta'
&:=
2L\,\Delta_W+\|c-c'\|_{L^2(\mu')}+|\eps-\eps'|,
\\
\bar\Delta
&:=
2L\,\Delta_W+\|c-c'\|_{L^2(\bar\mu)}+|\eps-\eps'|.
\end{align*}
Let \(\overline\gamma\) and \(\overline\eta\) be the constants from \cref{cor:symmetric-l2-stability} and define
\[
\Gamma
:=
(\Omega_P\cup\Omega_{P'})\times(\Omega_Q\cup\Omega_{Q'}),
\qquad
\Theta
:=
(\Omega_P\times\Omega_Q)\cup(\Omega_{P'}\times\Omega_{Q'}),
\]
\[
D_*:=\sup_{z,z'\in\Theta}\|z-z'\|, \qquad A:=1+\frac{6\|c\|_{L^\infty(\Gamma)}}{\eps},
\qquad
A':=1+\frac{6\|c'\|_{L^\infty(\Gamma)}}{\eps'},
\]
and
\begin{equation*}
\label{eq:primal-density-error-constant}
\hat\Delta
:=
\min\left\{
\frac{\overline\gamma\,\bar\Delta+\|c-c'\|_{L^2(\bar\mu)}+A'|\eps-\eps'|}{\eps},
\frac{\overline\gamma\,\bar\Delta+\|c-c'\|_{L^2(\bar\mu)}+A|\eps-\eps'|}{\eps'}
\right\}.
\end{equation*}
Assume that
\[
\max\{\Delta,\Delta'\}<\overline\eta,
\]
then the following hold.
\begin{enumerate}
\item[(i)] $L^2$-stability of the density:
\begin{equation}
\label{eq:primal-density-l2-bound}
\|\zeta-\zeta'\|_{L^2(\bar\mu)}\le \hat\Delta.
\end{equation}

\item[(ii)] TV-stability of the optimal coupling:
\begin{equation}
\label{eq:primal-tv-bound}
\|\pi-\pi'\|_{\mathrm{TV}}
\le
\frac{\hat\Delta}{2}
+
\frac{A+A'}{2}\,\Delta_{\mathrm{TV}}.
\end{equation}

\item[(iii)] $W_1$-stability of the optimal coupling:
\begin{equation}
\label{eq:primal-w1-bound}
W_1(\pi,\pi')
\le
\frac{D_*\,\hat\Delta}{2}
+
\sqrt2\left[
\frac{A+A'}{2}
+
\frac{(\sqrt2+1)L\,D_*}{4}\Big(\frac{1}{\eps}+\frac{1}{\eps'}\Big)
\right]\Delta_W.
\end{equation}
\end{enumerate}
\end{theorem}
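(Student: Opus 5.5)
The plan is to derive all three estimates from the mixture bound \eqref{eq:symmetric-l2-bound-mixture} of \cref{cor:symmetric-l2-stability}, which applies because $\max\{\Delta,\Delta'\}<\overline\eta$.

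\emph{Step (i).} Since $t\mapsto t_+$ is $1$-Lipschitz, write
\[
\zeta-\zeta'=\frac{(h-c)_+-(h'-c')_+}{\eps}+(h'-c')_+\Big(\frac1\eps-\frac1{\eps'}\Big).
\]
The first term has $L^2(\bar\mu)$-norm at most $(\|h-h'\|_{L^2(\bar\mu)}+\|c-c'\|_{L^2(\bar\mu)})/\eps\le(\overline\gamma\bar\Delta+\|c-c'\|_{L^2(\bar\mu)})/\eps$. For the second term we need a pointwise bound $(h'-c')_+\le A'\eps'$ on $\Gamma$. Here \eqref{eq:potential-sum-bounds} gives $h'\le 5\|c'\|_\infty+\eps'$, so $(h'-c')_+\le 6\|c'\|_{L^\infty(\Gamma)}+\eps'=A'\eps'$, using that the bounds in \cref{prop:qot-basic-properties}(iv) hold on all of $\X\times\Y\supset\Gamma$ (with $\|c'\|_\infty$ possibly replaced by its restriction to the relevant set; I expect a minor check here). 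Then $A'\eps'|1/\eps-1/\eps'|=A'|\eps-\eps'|/\eps$. Swapping the roles of the two data gives the second expression, and taking the minimum yields \eqref{eq:primal-density-l2-bound}.

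\emph{Step (ii).} For a set $A_0$,
\[
\pi(A_0)-\pi'(A_0)=\int_{A_0}(\zeta-\zeta')\,d\mu+\int_{A_0}\zeta'\,d(\mu-\mu') .
\]
Rather than this asymmetric split, I would use the symmetric one: write $\mu=\bar\mu+\tfrac12(\mu-\mu')$ and $\mu'=\bar\mu-\tfrac12(\mu-\mu')$, so that
\[
\pi-\pi'=(\zeta-\zeta')\bar\mu+\tfrac12(\zeta+\zeta')(\mu-\mu').
\]
The first piece is bounded in TV by $\tfrac12\|\zeta-\zeta'\|_{L^1(\bar\mu)}\le\tfrac12\hat\Delta$, with TV being half the $L^1$ norm and Cauchy--Schwarz against the probability measure $\bar\mu$. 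For the second, $\zeta\le A$ and $\zeta'\le A'$ pointwise, and $\|\mu-\mu'\|_{\mathrm{TV}}\le\Delta_{\mathrm{TV}}$ by the product coupling inequality. This gives $\tfrac{A+A'}{2}\Delta_{\mathrm{TV}}$, and I expect the constants to match \eqref{eq:primal-tv-bound} after carefully tracking the factor $\tfrac12$.

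\emph{Step (iii).} I would use Kantorovich--Rubinstein duality: for $1$-Lipschitz $\varphi$ on $\Theta$ (normalized so $|\varphi|\le D_*/2$ after subtracting a constant, which is allowed since both measures are probabilities), use the same symmetric split. The first term gives $\tfrac{D_*}{2}\hat\Delta$. The second is $\tfrac12\int\varphi(\zeta+\zeta')\,d(\mu-\mu')$, bounded by $\tfrac12 W_1(\mu,\mu')\,\Lip(\varphi(\zeta+\zeta'))$. This is the main obstacle: we need the Lipschitz constant of $\varphi\zeta$. We have $\Lip(\varphi\zeta)\le\|\zeta\|_\infty+\tfrac{D_*}{2}\Lip(\zeta)$, and $\Lip(\zeta)\le\Lip(h-c)/\eps$. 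The potentials $f,g$ and the cost $c$ are each $L$-Lipschitz, so in Euclidean norm on $\R^{2d}$ we get $\Lip(f\oplus g)\le\sqrt2L$ and hence $\Lip(h-c)\le(\sqrt2+1)L$. Combining with $W_1(\mu,\mu')\le\sqrt2\Delta_W$ (couple the products componentwise and use $a+b\le\sqrt2\sqrt{a^2+b^2}$) yields
\[
\tfrac{\sqrt2}{2}\Delta_W\Big[A+A'+\tfrac{D_*}{2}(\sqrt2+1)L\Big(\tfrac1\eps+\tfrac1{\eps'}\Big)\Big],
\]
which is \eqref{eq:primal-w1-bound}. The delicate point is that $\varphi\zeta$ must be Lipschitz on the union $\Gamma$ (where $\mu$ and $\mu'$ live) while $\varphi$ is centered only over $\Theta$. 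I would handle this by extending $\varphi$ and checking that the bound $|\varphi|\le D_*/2$ holds where needed, possibly adjusting via a McShane extension.
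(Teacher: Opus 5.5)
Your proposal is correct and follows the paper's proof essentially step for step: the mixture bound from \cref{cor:symmetric-l2-stability}, the same two-way splitting of $\zeta-\zeta'$ for (i), the symmetric decomposition $\pi-\pi'=(\zeta-\zeta')\bar\mu+\tfrac12(\zeta+\zeta')(\mu-\mu')$ for both (ii) and (iii), and the same Lipschitz bookkeeping together with $W_1(\mu,\mu')\le\sqrt2\,\Delta_W$. The loose ends you flag close as follows. The bound $(h'-c')_+\le\eps'A'$ in terms of $\|c'\|_{L^\infty(\Gamma)}$ follows from \cref{prop:qot-basic-properties}(iv) applied with localization sets $\Omega_P\cup\Omega_{P'}$ and $\Omega_Q\cup\Omega_{Q'}$. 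The factor $\tfrac12$ in (ii) comes from $2\|\pi-\pi'\|_{\mathrm{TV}}=|\pi-\pi'|(\R^d\times\R^d)$ and the triangle inequality. In (iii) there is no $\Gamma$-versus-$\Theta$ issue, because $\mu$ and $\mu'$ are both concentrated on $\Theta$, so Kantorovich--Rubinstein duality on $\Theta$ suffices.
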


Complementing the $L^2$-stability of the density in~\eqref{eq:primal-density-l2-bound}, we have the following \(L^\infty\)-stability if the perturbation of the cost function is quantified in $L^\infty$.

\begin{remark}[\(L^\infty\)-stability of the density]
\label{rem:primal-density-linfty-stability}
Let $\Delta_*:=d_{\mathfrak D}\big((P,Q,c,\eps),(P',Q',c',\eps')\big)$ be as in~\eqref{eq:distance-D} and $\overline C$ as in~\eqref{eq:overline-C}. Using the notation of \cref{thm:primal-stability}, but replacing \(\hat\Delta\) by 
\begin{equation*}
\hat\Delta_\infty
:=
\min\left\{
\frac{\overline C\,\Delta_*+\|c-c'\|_{L^\infty(\Gamma)}+A'|\eps-\eps'|}{\eps},
\frac{\overline C\,\Delta_*+\|c-c'\|_{L^\infty(\Gamma)}+A|\eps-\eps'|}{\eps'}
\right\},
\end{equation*}
we have the following \(L^\infty\) bound: if $\Delta_*<\overline\eta_*$, where \(\overline\eta_*\) is as in \cref{cor:solution-map-lipschitz}, then 
\[
\|\zeta-\zeta'\|_{L^\infty(\Gamma)}\le \hat\Delta_\infty.
\]
The proof is the same as for \cref{eq:primal-density-l2-bound}, except that now the application of \cref{cor:symmetric-l2-stability} is replaced by an application of \cref{cor:solution-map-lipschitz}.
\end{remark}

\section{Stability of the optimal support}
\label{sec:support-stability}

As the support of the optimal coupling is characterized by~\eqref{eq:optimal-support}, we expect that stability of the supports is linked to stability of the potentials. However, the latter alone is not sufficient. This is most evident in the degenerate example \(c\equiv 0\), where it is easy to see that the optimal coupling is the product $\pi=P\otimes Q$ of the marginals (for any $\eps>0$). In particular, the potentials satisfy $f\oplus g \equiv \eps$, independently of $(P,Q)$, showing that control of the potentials alone cannot imply control of the supports. For that reason, the stability result below features a direct control $\Delta_\Omega$ on the marginal supports on the right-hand side of~\eqref{eq:support-hausdorff-bound}, in addition to the quantities needed to control the potentials.\footnote{One could, of course, choose a metric such as $W_\infty$ to obtain simultaneous control, but that would yield a very loose bound. E.g., in a case where the marginals vary but their supports are fixed, our control $\Delta_\Omega$ is vacuous, but $W_\infty$ would give a strong restriction.}

Given compact sets \(\emptyset \neq A,B\subset\R^{m}\), their Hausdorff distance is denoted
\[
d_H(A,B)
:=
\max\Big\{
\sup_{z\in A}\dist(z,B),\,
\sup_{z\in B}\dist(z,A)
\Big\}.
\]
We recall that \(\dist\) and \(d_H\) are computed with respect to the Euclidean norm, i.e., for \(z=(x,y)\in\R^d\times\R^d\), we use \(\|z\|=(\|x\|^2+\|y\|^2)^{1/2}\). 

To guarantee stability of the optimal supports, the next theorem requires the nondegeneracy condition~\eqref{eq:exterior-nondegeneracy} which will be discussed in detail below. In particular, we will see in \cref{pr:concavity-exterior-nondegeneracy} that~\eqref{eq:exterior-nondegeneracy} is satisfied with $a=\eps/\diam(\Omega_P)$ for the quadratic cost $c(x,y)=\frac12\|x-y\|^2$. On the other hand, \cref{ex:failure-support-stability} will illustrate that, in general, stability can fail when~\eqref{eq:exterior-nondegeneracy} is not satisfied.

\begin{theorem}[Support stability]
\label{thm:support-stability}
Let \(\mathfrak D=\mathfrak D(\underline\eps,D,L,\underline\lambda,\overline\Lambda,\underline\delta,\underline q)\) be as in \cref{def:admissible-class} and $\overline C$, $\overline\eta_*$ as in \cref{cor:solution-map-lipschitz}. For \((P,Q,c,\eps)\in\mathfrak D\), let
$h_{(P,Q,c,\eps)}=f\oplus g$, where $(f,g)$ are the potentials
corresponding to \((P,Q,c,\eps)\). Let
\[
(P,Q,c,\eps),\ (P',Q',c',\eps')\in\mathfrak D, \qquad \Delta_*:=d_{\mathfrak D}((P,Q,c,\eps),(P',Q',c',\eps')).
\]
Set
\[
\Delta_\Omega:=\Big(d_H(\Omega_P,\Omega_{P'})^2+d_H(\Omega_Q,\Omega_{Q'})^2\Big)^{1/2},
\]
\[
h:=h_{(P,Q,c,\eps)},
\qquad
h':=h_{(P',Q',c',\eps')},
\]
\[
\sigma:=h-c,
\qquad
\sigma':=h'-c',
\qquad
\pi:=\pi_{(P,Q,c,\eps)},
\qquad
\pi':=\pi_{(P',Q',c',\eps')},
\]
\[
\Sigma:=\supp(\pi),
\qquad
\Sigma':=\supp(\pi'),
\qquad
\delta_*:=\overline C\,\Delta_*+\|c-c'\|_{L^\infty(\X\times\Y)}.
\]
Assume that there exists \(a>0\) such that
\begin{equation}
\label{eq:exterior-nondegeneracy}
\sigma(z)\le -a\,\dist(z,\Sigma)
\qquad\text{for all }z\in \Omega_{P\otimes Q}\setminus\Sigma,
\end{equation}
and that the same bound holds for $(\sigma',\Sigma',P',Q')$, with the same constant $a$.
If $\Delta_*<\overline\eta_*$, the Hausdorff distance of the optimal supports satisfies
\begin{equation}
\label{eq:support-hausdorff-bound}
d_H(\Sigma,\Sigma')
\le
\left(1+\frac{(\sqrt2+1)L}{a}\right)\Delta_\Omega+\frac{\delta_*}{a}.
\end{equation}
\end{theorem}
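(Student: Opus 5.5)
By symmetry of the assumptions (both data lie in $\mathfrak D$, the nondegeneracy condition holds for both with the same $a$, and $\delta_*$ is symmetric), it suffices to bound $\sup_{z\in\Sigma'}\dist(z,\Sigma)$. The other half of the Hausdorff distance follows by exchanging the roles of the two data.

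The first ingredient is uniform control of the slacks. Since $\Delta_*<\overline\eta_*$, \cref{cor:solution-map-lipschitz} gives $\|h-h'\|_{L^\infty(\X\times\Y)}\le \overline C\Delta_*$. Hence
\[
\|\sigma-\sigma'\|_{L^\infty(\X\times\Y)}\le \overline C\Delta_*+\|c-c'\|_{L^\infty(\X\times\Y)}=\delta_*.
\]
Moreover, $f,g,f',g'$ are $L$-Lipschitz and $c$ is $L$-Lipschitz. So $\sigma$ is Lipschitz on $\X\times\Y$ with respect to the Euclidean norm on $\R^d\times\R^d$, with constant at most $(\sqrt2+1)L$: indeed, $|f(x)-f(\tilde x)|+|g(y)-g(\tilde y)|\le L(\|x-\tilde x\|+\|y-\tilde y\|)\le \sqrt2 L\|z-\tilde z\|$, and the cost contributes $L\|z-\tilde z\|$. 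This is the source of the factor $(\sqrt2+1)L$ in \eqref{eq:support-hausdorff-bound}.

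Next, fix $z'=(x',y')\in\Sigma'$. By \eqref{eq:optimal-support} and continuity, it is enough to treat points $z'\in\Omega_{P'}\times\Omega_{Q'}$ with $\sigma'(z')>0$; the general case follows by taking closures, since the bound is uniform. By the definition of Hausdorff distance and compactness, there exist $x\in\Omega_P$ with $\|x-x'\|\le d_H(\Omega_P,\Omega_{P'})$ and $y\in\Omega_Q$ with $\|y-y'\|\le d_H(\Omega_Q,\Omega_{Q'})$. Then $z=(x,y)\in\Omega_P\times\Omega_Q=\Omega_{P\otimes Q}$ satisfies $\|z-z'\|\le\Delta_\Omega$. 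We then estimate
\[
\sigma(z)\ge \sigma(z')-(\sqrt2+1)L\Delta_\Omega\ge \sigma'(z')-\delta_*-(\sqrt2+1)L\Delta_\Omega> -\delta_*-(\sqrt2+1)L\Delta_\Omega.
\]
There are two cases. If $z\in\Sigma$, then $\dist(z',\Sigma)\le\Delta_\Omega$. Otherwise $z\in\Omega_{P\otimes Q}\setminus\Sigma$, and \eqref{eq:exterior-nondegeneracy} gives
\[
a\,\dist(z,\Sigma)\le-\sigma(z)<\delta_*+(\sqrt2+1)L\Delta_\Omega.
\]
In both cases,
\[
\dist(z',\Sigma)\le \|z-z'\|+\dist(z,\Sigma)\le \Delta_\Omega+\frac{(\sqrt2+1)L\Delta_\Omega+\delta_*}{a},
\]
which is \eqref{eq:support-hausdorff-bound}.

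The main subtlety, rather than a real obstacle, is that $\sigma'(z')>0$ at $z'$ alone says nothing about $\sigma$ off the support of $P\otimes Q$. This is why we first project $z'$ into $\Omega_{P\otimes Q}$ using $\Delta_\Omega$, and only then apply the $L^\infty$ bound of \cref{cor:solution-map-lipschitz}, which holds on all of $\X\times\Y$ rather than merely $\mu$-a.e. We must also check that the Lipschitz versions of the potentials are the ones entering \eqref{eq:optimal-support}, which is guaranteed by \cref{prop:qot-basic-properties}(iii) and (v).
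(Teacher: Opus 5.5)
Your proposal is correct and follows essentially the same argument as the paper. In both, you transfer $\sigma'\ge0$ on $\Sigma'$ to $\sigma\ge-\delta_*$ via \cref{cor:solution-map-lipschitz}, project into $\Omega_{P\otimes Q}$ at cost $\Delta_\Omega$ using the $(\sqrt2+1)L$-Lipschitz bound on $\sigma$, apply the nondegeneracy condition, and conclude by symmetry. The only cosmetic difference is that you work on the dense set $\{\sigma'>0\}$ and pass to closures, whereas the paper uses continuity of $\sigma'$ to get $\sigma(z')\ge-\delta_*$ directly for every $z'\in\Sigma'$.
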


Under the nondegeneracy assumption~\eqref{eq:exterior-nondegeneracy}, the value \(\sigma(z)\) is comparable to $-\dist(z,\Sigma)$ outside the support $\Sigma=\overline{\{z\in\Omega_{P\otimes Q}:\sigma(z)>0\}}$:
\[
  -\Lip(\sigma) \dist(z,\Sigma) \leq \sigma(z) \leq -a \dist(z,\Sigma)\qquad\text{for all }z\in \Omega_{P\otimes Q}\setminus\Sigma,
\]
as $\sigma\geq0$ on $\Sigma$. This linear detachment prevents instability. In general, if $\sigma$ is nearly flat at the boundary of the support, a small perturbation of $\sigma$ can cause a large change of the support. Indeed, the following example, where the marginals vary without changing support and $c,\eps$ are fixed, shows that support stability for the optimal coupling can fail if the nondegeneracy condition~\eqref{eq:exterior-nondegeneracy} is removed.

\begin{example}[Failure of support stability]
\label{ex:failure-support-stability}
Let
\[
\X=[0,1],\qquad \Y=\{0,1\}, \qquad \eps=1.
\]
Define
\[
u(x)=
\begin{cases}
0, & 0\le x\le \frac14,\\[1mm]
\frac{32}{5}\left(x-\frac14\right), & \frac14<x<\frac12,\\[1mm]
\frac85, & \frac12\le x\le 1.
\end{cases}
\]
Then \(u\) is Lipschitz, \(0\le u\le 8/5<2\), and $\int_0^1 u(x)\,dx=1$. We consider the cost function
\[
c(x,0)=u(x),
\qquad
c(x,1)=2-u(x),
\]
and note that $c$ can be extended to a Lipschitz function on $\R\times\R$. Moreover, we define a family of marginals indexed by \(0\leq\eta<1\) as follows. The second marginal is fixed to be
\[
Q^\eta=Q^0=\frac12\delta_0+\frac12\delta_1,
\]
whereas the first marginal \(P^\eta\) is defined via its density
\[
p_\eta(x):=\frac{dP^\eta}{dx}(x)
=
\begin{cases}
1+\eta, & 0\le x\le \frac14,\\[1mm]
1-\frac{\eta}{3}, & \frac14<x\le 1.
\end{cases}
\]
Then \(P^\eta\) has density bounded above and below on the fixed convex compact
support \([0,1]\), \(P^0\) is the uniform measure on \([0,1]\), and \(P^\eta\to P^0\) in total variation.

For the marginals \(P^0,Q^0\), the sum of the potentials is given by
\[
h^0(x,0)=h^0(x,1)=2.
\]
Indeed, this gives rise to 
\[
\sigma^0(x,0)=h^0(x,0)-c(x,0)=2-u(x),
\qquad
\sigma^0(x,1)=h^0(x,1)-c(x,1)=u(x).
\]
In view of
$
\frac12(\sigma^0(x,0)+\sigma^0(x,1))=1
$ and $
\int_0^1 \sigma^0(x,1)\,dx=\int_0^1 u(x)\,dx=1,
$
the plan with density \((\sigma^0)_+\) with respect to \(P^0\otimes Q^0\)
satisfies the marginal constraints and is optimal. Its support is
\[
\Sigma^0
=
\supp(\pi^0)
=
\big([0,1]\times\{0\}\big)
\cup
\big([\tfrac14,1]\times\{1\}\big).
\]
We note that the nondegeneracy condition \eqref{eq:exterior-nondegeneracy} fails: as
$
\sigma^0(0,1)=0
$
and
$
\dist\big((0,1),\Sigma^0\big)=\frac14,
$
there is no \(a>0\) such that
\[
\sigma^0(z)\le -a\,\dist(z,\Sigma^0)
\qquad
\text{for all }z\in\Omega_{P^0\otimes Q^0}\setminus\Sigma^0.
\]

Now consider \(P^\eta,Q^\eta\). In view of $\int_0^1 u(x)p_\eta(x)\,dx=1-\frac{\eta}{3}$, putting 
\[
h^\eta(x,0)=2-\delta_\eta,
\qquad
h^\eta(x,1)=2+\delta_\eta,
\qquad \delta_\eta=\frac{\eta}{3}
\]
gives
\[
\sigma^\eta(x,0)
=
2-u(x)-\delta_\eta,
\qquad
\sigma^\eta(x,1)
=
u(x)+\delta_\eta.
\]
For \(0<\eta<1\), both quantities are positive on the whole of \([0,1]\).
Moreover,
$
\frac12\big(\sigma^\eta(x,0)+\sigma^\eta(x,1)\big)=1
$
and
$
\int_0^1 \sigma^\eta(x,1)\,dP^\eta(x)
=
\int_0^1 \big(u(x)+\delta_\eta\big)p_\eta(x)\,dx
=
1.
$
Thus the optimal plan for \((P^\eta,Q^\eta,c,1)\) has density
\((\sigma^\eta)_+\) with respect to \(P^\eta\otimes Q^\eta\), and therefore
\[
\Sigma^\eta
=
\supp(\pi^\eta)
=
[0,1]\times\{0,1\}.
\]
Consequently, $\Sigma^\eta$ does not converge to $\Sigma^0$:
\[
d_H(\Sigma^\eta,\Sigma^0)
=
\frac14
\qquad
\text{for every }0<\eta<1.
\]
On the other hand, $P^\eta\to P^0$ and $Q^\eta=Q^0$, and the marginal supports $\Omega_{P^\eta}=\Omega_{P^0}=[0,1]$ and $\Omega_{Q^\eta}=\Omega_{Q^0}=\{0,1\}$ do not move, so that all conditions of \cref{thm:support-stability} except~\eqref{eq:exterior-nondegeneracy} are satisfied for sufficiently small $\eta>0$.
\end{example}

To apply \cref{thm:support-stability}, we need to verify the nondegeneracy condition~\eqref{eq:exterior-nondegeneracy}. The next proposition shows that for the key example of quadratic cost, the condition holds as soon as one marginal support is convex and compact.

\begin{proposition}[Nondegeneracy]
\label{pr:concavity-exterior-nondegeneracy}
Let \((P,Q,c,\eps)\) satisfy the hypotheses of \cref{prop:qot-basic-properties} and assume that \(\Omega_P\) is convex with \(D_P:=\diam(\Omega_P)<\infty\). Set
\[
h:=h_{(P,Q,c,\eps)},
\qquad
\sigma:=h-c,
\qquad
\pi:=\pi_{(P,Q,c,\eps)},
\qquad
\Sigma:=\supp(\pi).
\]
Assume that for each \(y\in\Omega_Q\), the map
\begin{equation}
\label{eq:fiberwise-concavity}
x\mapsto \sigma(x,y) \qquad\text{is concave on } \Omega_P.
\end{equation}
If \(D_P=0\), then \(\Omega_{P\otimes Q}\setminus\Sigma=\emptyset\); if \(D_P>0\), then
\begin{equation}
\label{eq:concavity-exterior-nondegeneracy}
\sigma(z)\le -\frac{\eps}{D_P}\,\dist(z,\Sigma)
\qquad\text{for all }z\in \Omega_{P\otimes Q}\setminus\Sigma,
\end{equation}
in particular, the nondegeneracy condition~\eqref{eq:exterior-nondegeneracy} holds with $a=\eps/D_P$. 

The assumption~\eqref{eq:fiberwise-concavity} holds in particular for the quadratic cost $c(x,y)=\frac12\|x-y\|^2$.
\end{proposition}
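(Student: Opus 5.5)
The idea is to combine the second first-order condition in~\eqref{eq:potential-foc} with the fiberwise concavity~\eqref{eq:fiberwise-concavity}. Fix $y\in\Omega_Q$. By~\eqref{eq:potential-foc}, $\int \sigma(x',y)_+\,dP(x')=\eps$. Since $P$ is a probability measure, the integrand must reach at least $\eps$ somewhere on $\Omega_P$. The potentials are Lipschitz and $\Omega_P$ is compact, so $\sigma(\cdot,y)$ is continuous and there is a point $x^*=x^*(y)\in\Omega_P$ with $\sigma(x^*,y)\ge\eps$.

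\emph{The case $D_P=0$.} Here $\Omega_P=\{x_0\}$, so $x^*=x_0$ and $\sigma(x_0,y)\ge\eps>0$ for every $y\in\Omega_Q$. By~\eqref{eq:optimal-support}, every point of $\Omega_{P\otimes Q}$ then lies in $\Sigma$, so $\Omega_{P\otimes Q}\setminus\Sigma=\emptyset$.

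\emph{The case $D_P>0$.} Let $z=(x,y)\in\Omega_{P\otimes Q}\setminus\Sigma$. By~\eqref{eq:optimal-support} we have $\sigma(z)\le0$, and we write $\sigma(z)=-s$ with $s\ge 0$. Consider the segment $x_t:=(1-t)x+tx^*$ for $t\in[0,1]$, which stays in $\Omega_P$ by convexity. Concavity of $\sigma(\cdot,y)$ gives
\[
\sigma(x_t,y)\ge -(1-t)s+t\eps .
\]
This lower bound is positive for every $t>t_0:=s/(s+\eps)$.

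We first rule out $s=0$. If $s=0$, then $t_0=0$, so $\sigma(x_t,y)>0$ for all $t\in(0,1]$. Letting $t\downarrow0$ shows that $z$ lies in the closure in~\eqref{eq:optimal-support}, i.e.\ $z\in\Sigma$, which is a contradiction. Hence $s>0$.

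Now, for every $t>t_0$, the point $(x_t,y)\in\Omega_P\times\Omega_Q$ has positive slack and therefore belongs to $\Sigma$. Consequently
\[
\dist(z,\Sigma)\le \inf_{t>t_0}\|x-x_t\| = t_0\|x-x^*\| \le \frac{s}{s+\eps}\,D_P \le \frac{s\,D_P}{\eps}.
\]
Rearranging gives $\sigma(z)=-s\le -\frac{\eps}{D_P}\dist(z,\Sigma)$, which is~\eqref{eq:concavity-exterior-nondegeneracy}.

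\emph{Concavity for the quadratic cost.} This is the step I expect to require the most care. For $c(x,y)=\frac12\|x-y\|^2$ we can write
\[
\sigma(x,y)=-\varphi(x)+x\cdot y+\hat g(y),\qquad \varphi(x):=\tfrac12\|x\|^2-f(x),\quad \hat g(y):=g(y)-\tfrac12\|y\|^2,
\]
so it suffices to show that $\varphi$ is convex. By the first condition in~\eqref{eq:potential-foc}, $t=\varphi(x)$ solves $F(x,t)=\eps$, where
\[
F(x,t):=\int (x\cdot y+\hat g(y)-t)_+\,dQ(y).
\]
The function $F$ is jointly convex in $(x,t)$, because it is an integral of positive parts of affine functions. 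It is also nonincreasing in $t$, and strictly decreasing wherever it is positive.

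It follows that $\varphi(x)=\inf\{t: F(x,t)\le\eps\}$. The set $\{(x,t):F(x,t)\le\eps\}$ is convex and, by monotonicity in $t$, closed upward in $t$, so it is the epigraph of $\varphi$. Therefore $\varphi$ is convex, and $x\mapsto\sigma(x,y)$ is concave for each $y$.

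The subtle point is the identification of the sublevel set with the epigraph. It uses that $F(x,\cdot)$ is continuous and strictly decreasing on $\{F>0\}$, so that $\varphi(x)$ is the unique root of $F(x,\cdot)=\eps$. I would verify this carefully, using that $\eps>0$ forces the root to lie in the region where $F>0$.
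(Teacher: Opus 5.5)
Your proof of \eqref{eq:concavity-exterior-nondegeneracy} is correct and is essentially the paper's. Both arguments pick $x_*$ with $\sigma(x_*,y)\ge\eps$ from the second first-order condition and use concavity of $\sigma(\cdot,y)$ along the segment from $x$ to $x_*$. The paper packages this through the convex section $S_y=\{\tilde x\in\Omega_P:\sigma(\tilde x,y)\ge0\}$: it shows $S_y\times\{y\}\subset\Sigma$ and then locates the zero of $\sigma(\cdot,y)$ on the segment by the intermediate value theorem. You skip both steps and directly use that $(x_t,y)\in\Sigma$ for $t>s/(s+\eps)$. That is slightly shorter and gives the same constant $\eps/D_P$.

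The real difference is the last claim. The paper simply cites the known representation $\sigma(x,y)=\langle x,y\rangle-\varphi(x)-\psi(y)$ with $\varphi,\psi$ convex. You instead prove the half that is needed, convexity of $\varphi$, from the first identity in \eqref{eq:potential-foc} alone. You use joint convexity of $F(x,t)=\int(x\cdot y+\hat g(y)-t)_+\,dQ(y)$ and the identification $\{t:F(x,t)\le\eps\}=[\varphi(x),\infty)$. Your strict-monotonicity remark does justify that identification: for $t<\varphi(x)$,
\[
F(x,t)-\eps\ \ge\ (\varphi(x)-t)\,Q\bigl(x\cdot y+\hat g(y)>\varphi(x)\bigr)\ >\ 0,
\]
and the probability is positive because $F(x,\varphi(x))=\eps>0$. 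This makes the proposition self-contained and derives the fiberwise concavity directly from the first-order condition. The cited result additionally gives convexity of $\psi$, which is not needed here.

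One small point to tidy: $\varphi$ is only defined on $\X$, which need not be convex. So either intersect the sublevel set with the convex set $\Omega_P\times\R$, or note that on $\X$ the function $\varphi$ agrees with the convex function $x\mapsto\inf\{t:F(x,t)\le\eps\}$ defined on all of $\R^d$. Here $F$ is finite because the integrand is bounded above in $y$, by $g$ being Lipschitz and the $-\tfrac12\|y\|^2$ term.
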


Combining \cref{thm:support-stability} and \cref{pr:concavity-exterior-nondegeneracy} yields the following corollary, where we also specialize to $\eps=\eps'$ to simplify the expressions.

\begin{corollary}[Support stability for quadratic cost]
\label{cor:quadratic-support-stability}
Let \(\mathfrak D=\mathfrak D(\eps,D,L,\underline\lambda,\overline\Lambda,\underline\delta,\underline q)\) be as in \cref{def:admissible-class} and $\overline C$, $\overline\eta_*$ as in \cref{cor:solution-map-lipschitz}. Let $\X,\Y$ be such that the quadratic cost
$
c(x,y):=\frac12\|x-y\|^2
$
is \(L\)-Lipschitz on \(\X\times\Y\). Consider marginals $(P,Q)$ and $(P',Q')$ such that 
$(P,Q,c,\eps)$ and $(P',Q',c,\eps)$ belong to $\mathfrak D$. Set
\[
\Delta_W
:=
\Big(W_1(P,P')^2+W_1(Q,Q')^2\Big)^{1/2},
\qquad
\Delta_\Omega
:=
\Big(
d_H(\Omega_P,\Omega_{P'})^2
+
d_H(\Omega_Q,\Omega_{Q'})^2
\Big)^{1/2}.
\]
If \(2L\Delta_W<\overline\eta_*\), the corresponding optimal supports $\Sigma,\Sigma'$ satisfy
\begin{equation*}
d_H(\Sigma,\Sigma')
\le
\left(
1+\frac{(\sqrt2+1)L D}{\eps}
\right)\Delta_\Omega
+
\frac{2L D\,\overline C}{\eps}\,\Delta_W .
\end{equation*}
\end{corollary}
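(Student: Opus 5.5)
The corollary follows by specializing \cref{thm:support-stability} to $c=c'$, $\eps=\eps'$ and verifying its hypotheses with \cref{pr:concavity-exterior-nondegeneracy}. Since the cost and regularization parameter are fixed, we have $\|c-c'\|_{L^\infty(\X\times\Y)}=0$ and $|\eps-\eps'|=0$. The distance \eqref{eq:distance-D} therefore reduces to $\Delta_*=2L\Delta_W$. The smallness condition $2L\Delta_W<\overline\eta_*$ is thus exactly $\Delta_*<\overline\eta_*$, and the quantity in \cref{thm:support-stability} becomes $\delta_*=\overline C\,\Delta_*=2L\overline C\,\Delta_W$.

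The remaining hypothesis of \cref{thm:support-stability} is the nondegeneracy condition \eqref{eq:exterior-nondegeneracy} for both data sets with a common constant $a$. Both $(P,Q,c,\eps)$ and $(P',Q',c,\eps)$ lie in $\mathfrak D$, so $\Omega_P$ and $\Omega_{P'}$ are compact and convex with diameters at most $D$. For the quadratic cost, \cref{pr:concavity-exterior-nondegeneracy} asserts the fiberwise concavity \eqref{eq:fiberwise-concavity}. Indeed, $\sigma(x,y)=f(x)+g(y)-\frac12\|x-y\|^2$, and the concavity in $x$ comes from the structure of $f$ that the proposition exploits. We take this as given from the proposition's final sentence.

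The proposition then gives $\sigma(z)\le-(\eps/D_P)\dist(z,\Sigma)$ when $D_P>0$. Since $D_P\le D$ and $\sigma\le 0$ off $\Sigma$, this implies $\sigma(z)\le -(\eps/D)\dist(z,\Sigma)$. When $D_P=0$, the set $\Omega_{P\otimes Q}\setminus\Sigma$ is empty and the condition holds vacuously. The same argument applies to $(\sigma',\Sigma')$. Hence \eqref{eq:exterior-nondegeneracy} holds for both data sets with the common constant $a=\eps/D$.

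Substituting $a=\eps/D$ and $\delta_*=2L\overline C\Delta_W$ into \eqref{eq:support-hausdorff-bound} gives
\[
d_H(\Sigma,\Sigma')\le\Big(1+\tfrac{(\sqrt2+1)LD}{\eps}\Big)\Delta_\Omega+\tfrac{2LD\,\overline C}{\eps}\Delta_W,
\]
which is the claim.

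The only delicate point is passing from $\eps/D_P$ to the uniform constant $\eps/D$. This works because weakening the slope preserves the inequality, given that $\dist(z,\Sigma)\ge0$. The degenerate case $D_P=0$ is handled by the vacuous case of the proposition.
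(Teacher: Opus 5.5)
Your proposal is correct and takes the same route as the paper. The paper's proof is the one-line remark that the corollary comes from combining \cref{thm:support-stability} with \cref{pr:concavity-exterior-nondegeneracy} at $c=c'$, $\eps=\eps'$. You fill in the details it leaves implicit: $\Delta_*=2L\Delta_W$, $\delta_*=2L\overline C\Delta_W$, the weakening of the slope from $\eps/D_P$ to the common constant $a=\eps/D$ for both data sets, and the vacuous case $D_P=0$.
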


\appendix
\section{Proofs}
\label{sec:proofs}

\subsection{Proofs for \cref{sec:setting-background}}

The basic structural properties of QOT in \cref{prop:qot-basic-properties} are known or follow by known arguments. For completeness, the following gives details and references.

\begin{proof}[Proof of \cref{prop:qot-basic-properties}]
By the McShane extension theorem, \(c\) admits an \(L\)-Lipschitz extension
to \(\R^d\times\R^d\), still denoted by \(c\). As this extension is of
linear growth and \(P,Q\) have finite second moments, we are in the setting
of~\cite{Nutz.2024} with \(c\in L^2(P\otimes Q)\). In particular,
\cite[Section~2]{Nutz.2024} shows~(i), (ii) with
\((f,g)\in L^1(P)\times L^1(Q)\), and~\eqref{eq:optimal-density}. It also yields the existence
of \(L\)-Lipschitz versions of \((f,g)\) on \((\Omega_P,\Omega_Q)\),
implying that \((f,g)\in L^2(P)\times L^2(Q)\). Moreover, it shows the
validity of the first-order condition~\eqref{eq:potential-foc} for all
\(x\in\Omega_P\) and \(y\in \Omega_Q\), respectively. We can then uniquely
extend \(f\) to \(\X\) by enforcing the first equation
of~\eqref{eq:potential-foc}, using the monotonicity of
\(a\mapsto (a+ g(y)-c(x,y))_+\) and the intermediate value theorem as in
the argument stated below \cite[Eq.~(2.11)]{Nutz.2024}. Similarly for
\(g\) on \(\Y\). As the proof of the \(L\)-Lipschitz property in
\cite[Section~2]{Nutz.2024} is based solely on the first-order condition, the
constructed extensions are also \(L\)-Lipschitz.

If one of the two marginal supports is connected, the proof
\cite[Lemma~3.1]{GonzalezSanzNutzRiveros.25} shows that \((f,g)\) are
unique on \((\Omega_P,\Omega_Q)\) up to translation. Uniqueness on \(\X\)
and \(\Y\) then follows by the aforementioned uniqueness of the extensions
satisfying~\eqref{eq:potential-foc}. The cited lemma is stated for
\(c(x,y)=\frac{1}{2}\|x-y\|^2\) on a compact domain, but the proof remains
valid without changes for uniformly continuous costs on \(\X\times\Y\).

Regarding (iv), the oscillation bounds are stated in
\cite[Lemma~2.5]{Nutz.2024}; again, the proof is based solely on the
first-order condition, hence the bounds hold without exceptional set for the
chosen versions. To infer \eqref{eq:potential-sum-bounds}, set
\[
C:=\|c\|_{L^\infty(\X\times\Y)},
\qquad
M:=\sup_{\X\times\Y}(f\oplus g),
\qquad
m:=\inf_{\X\times\Y}(f\oplus g).
\]
Since
\[
\osc_{\X\times\Y}(f\oplus g)
\le
\osc_{\X}(f)+\osc_{\Y}(g)
\le 2\,\osc_{\X\times\Y}(c)
\le 4C,
\]
we have
\[
M-m\le 4C.
\]
To prove the upper bound, note that for all \((x,y)\in \X\times\Y\),
\[
f(x)+g(y)-c(x,y)\ge M-4C-C=M-5C.
\]
Taking positive parts, integrating with respect to \(P\otimes Q\), using
\(\Omega_P\times\Omega_Q\subset\X\times\Y\), and~\eqref{eq:potential-foc}, we obtain
\[
\eps
=
\int (f\oplus g-c)_+\,d(P\otimes Q)
\ge (M-5C)_+,
\]
which implies \(M\le 5C+\eps\). For the lower bound, we similarly have
\[
f(x)+g(y)-c(x,y)\le m+4C+C=m+5C
\]
for all \((x,y)\in \X\times\Y\). Proceeding as above yields
$
0<\eps \le (m+5C)_+=m+5C,
$
that is, \(m\ge -5C+\eps\).

Finally, the support~\eqref{eq:optimal-support} readily follows from~\eqref{eq:optimal-density} and the continuity of $f,g,c$. 
\end{proof}

\subsection{Proofs for \cref{sec:l2-stability}}

For ease of reference, we record the following fact about decompositions of \(w\in\Hil\).

\begin{remark}\label{rem:balanced-decomposition}
For \(w\in\Hil\), the decomposition into \(w=u\oplus v\) is not unique. If
\(w=u\oplus v\) is any decomposition with
\((u,v)\in L^2(P)\times L^2(Q)\), then
\[
\|w\|_{L^2(P\otimes Q)}^2
=
\|u\|_{L^2(P)}^2+\|v\|_{L^2(Q)}^2
+2\Big(\int u\,dP\Big)\Big(\int v\,dQ\Big),
\]
and in particular
\begin{equation}\label{eq:centered-decomp-norm}
\|u\|_{L^2(P)}^2+\|v\|_{L^2(Q)}^2= \|w\|_{L^2(P\otimes Q)}^2 \qquad\text{if} \qquad\int u\,dP=0 \text{ or }\int v\,dQ = 0.
\end{equation}
Writing \(\bar w:=\int w\,d(P\otimes Q)\), a balanced choice of decomposition is
\begin{align}\label{eq:balanced-decomp}
u(x):=\int w(x,y)\,dQ(y)-\frac{\bar w}{2}, \qquad v(y):=\int w(x,y)\,dP(x)-\frac{\bar w}{2}.
\end{align}
Then,
\begin{equation}\label{eq:balanced-decomp-means}
\int u\,dP=\int v\,dQ=\frac{\bar w}{2},
\end{equation}
and in particular
\begin{equation}\label{eq:balanced-decomp-norm}
\|u\|_{L^2(P)}^2+\|v\|_{L^2(Q)}^2 \leq \|u\|_{L^2(P)}^2+\|v\|_{L^2(Q)}^2 + \frac{\bar w^2}{2} = \|w\|_{L^2(P\otimes Q)}^2.
\end{equation}
\end{remark}

Next, we prove the basic \(L^2\)-estimate for perturbations of the data.

\begin{proof}[Proof of \cref{thm:l2-stability}]
Since \(\Omega_P\) and \(\Omega_Q\) are compact, \(f'\) and \(g'\) are bounded by \cref{prop:qot-basic-properties}, 
and \cref{thm:dual-error-bound} applies to \(h'\). Moreover, by \cref{prop:qot-basic-properties},
\begin{equation}\label{eq:perturbed-potential-foc-l2}
\begin{cases}
\displaystyle \int (f'(x)+g'(y)-c'(x,y))_+\,dQ'(y)=\eps'
\quad \text{for all }x\in\X,\\[1.2ex]
\displaystyle \int (f'(x)+g'(y)-c'(x,y))_+\,dP'(x)=\eps'
\quad \text{for all }y\in\Y.
\end{cases}
\end{equation}

\medskip
\noindent\emph{Step 1: We show \(\|{\rm D}\Phi(h')\|_{L^2(P\otimes Q)}
\le
\Delta/\eps\).}
Let \(w\in\Hil\) satisfy \(\|w\|_{L^2(P\otimes Q)}\le 1\). Let
\((u,v)\) be the balanced decomposition \(w=u\oplus v\) given by
\eqref{eq:balanced-decomp}, and write
\(\bar w:=\int w\,d(P\otimes Q)\). Recalling~\eqref{eq:dual-gradient-action} and
using Fubini's theorem, we have
\begin{align}
\langle {\rm D}\Phi(h'),w\rangle_{\Hil}
&=
\int_{\Omega_P} u(x)\Big(1-\frac{1}{\eps}\int (h'(x,y)-c(x,y))_+\,dQ(y)\Big)\,dP(x)
\nonumber\\
&\quad
+\int_{\Omega_Q} v(y)\Big(1-\frac{1}{\eps}\int (h'(x,y)-c(x,y))_+\,dP(x)\Big)\,dQ(y).
\label{eq:l2-gradient-split}
\end{align}

For \(x\in\Omega_P\), define
\[
\xi'_x(y):=(h'(x,y)-c'(x,y))_+,
\qquad y\in\Y.
\]
Then \eqref{eq:perturbed-potential-foc-l2} implies
\[
\int \xi'_x(y)\,dQ'(y)=\eps'
\qquad \text{for all }x\in\Omega_P,
\]
and hence
\begin{multline}\label{eq:l2-q-residual}
1-\frac{1}{\eps}\int (h'(x,y)-c(x,y))_+\,dQ(y)
=
\frac{\eps-\eps'}{\eps}
+\frac{1}{\eps}\int \xi'_x(y)\,d(Q'-Q)(y)
\\
+\frac{1}{\eps}\int \big[(h'(x,y)-c'(x,y))_+-(h'(x,y)-c(x,y))_+\big]\,dQ(y).
\end{multline}
Similarly, for \(y\in\Omega_Q\), defining
$
\eta'_y(x):=(h'(x,y)-c'(x,y))_+,
$
we have
\begin{multline}\label{eq:l2-p-residual}
1-\frac{1}{\eps}\int (h'(x,y)-c(x,y))_+\,dP(x)
=
\frac{\eps-\eps'}{\eps}
+\frac{1}{\eps}\int \eta'_y(x)\,d(P'-P)(x)
\\
+\frac{1}{\eps}\int \big[(h'(x,y)-c'(x,y))_+-(h'(x,y)-c(x,y))_+\big]\,dP(x).
\end{multline}
Substituting \eqref{eq:l2-q-residual} and \eqref{eq:l2-p-residual} into
\eqref{eq:l2-gradient-split}, and using
\eqref{eq:balanced-decomp-means}, we obtain
\begin{align}
&\langle {\rm D}\Phi(h'),w\rangle_{\Hil} \nonumber\\
&=
\frac{\eps-\eps'}{\eps}\,\bar w
\label{eq:l2-pairing-epsilon}
\\
&\quad
+\frac{1}{\eps}\int_{\Omega_P\times\Omega_Q}
w(x,y)\Big((h'(x,y)-c'(x,y))_+-(h'(x,y)-c(x,y))_+\Big)\,d(P\otimes Q)(x,y)
\label{eq:l2-pairing-cost}
\\
&\quad
+\frac{1}{\eps}\int_{\Omega_P}
u(x)\Big(\int \xi'_x(y)\,d(Q'-Q)(y)\Big)\,dP(x)
\label{eq:l2-pairing-q-marginal}
\\
&\quad
+\frac{1}{\eps}\int_{\Omega_Q}
v(y)\Big(\int \eta'_y(x)\,d(P'-P)(x)\Big)\,dQ(y).
\label{eq:l2-pairing-p-marginal}
\end{align}

Next, we estimate each term. For \eqref{eq:l2-pairing-epsilon}, noting
\(|\bar w|\leq\|w\|_{L^2(P\otimes Q)}\le 1\), we have
\begin{equation}\label{eq:l2-epsilon-bound}
\left|\frac{\eps-\eps'}{\eps}\bar w\right|
\le
\frac{|\eps-\eps'|}{\eps}\|w\|_{L^2(P\otimes Q)}
\le \frac{|\eps-\eps'|}{\eps}.
\end{equation}
For \eqref{eq:l2-pairing-cost}, using that \(t\mapsto t_+\) is
\(1\)-Lipschitz and the Cauchy--Schwarz inequality,
\begin{equation}\label{eq:l2-cost-bound}
\left|\eqref{eq:l2-pairing-cost}\right|
\le
\frac{\|c-c'\|_{L^2(P\otimes Q)}}{\eps}\|w\|_{L^2(P\otimes Q)}
\le \frac{\|c-c'\|_{L^2(P\otimes Q)}}{\eps}.
\end{equation}
In \eqref{eq:l2-pairing-q-marginal}, the function \(\xi'_x\) is
\(2L\)-Lipschitz, uniformly in \(x\), because \(g'\) and \(c'\)
are \(L\)-Lipschitz and \(t\mapsto t_+\) is \(1\)-Lipschitz. Thus, by
Kantorovich--Rubinstein duality,
\[
\left|\int \xi'_x(y)\,d(Q'-Q)(y)\right|
\le 2L\,W_1(Q,Q')
\qquad \text{for all }x\in\Omega_P,
\]
showing
\begin{equation}\label{eq:l2-q-marginal-bound}
\left|\eqref{eq:l2-pairing-q-marginal}\right|
\le
\frac{2L}{\eps}W_1(Q,Q')\,\|u\|_{L^2(P)}.
\end{equation}
Similarly,
\begin{equation}\label{eq:l2-p-marginal-bound}
\left|\eqref{eq:l2-pairing-p-marginal}\right|
\le
\frac{2L}{\eps}W_1(P,P')\,\|v\|_{L^2(Q)}.
\end{equation}
Write
\(\Delta_W:=[W_1(P,P')^2+W_1(Q,Q')^2]^{1/2}\). Combining
\eqref{eq:l2-q-marginal-bound}, \eqref{eq:l2-p-marginal-bound},
Cauchy--Schwarz, and~\eqref{eq:balanced-decomp-norm},
\begin{align}
\left|\eqref{eq:l2-pairing-q-marginal}+\eqref{eq:l2-pairing-p-marginal}\right|
&\le
\frac{2L}{\eps}\Big(
W_1(Q,Q')\|u\|_{L^2(P)}+W_1(P,P')\|v\|_{L^2(Q)}
\Big)
\nonumber\\
&\le
\frac{2L}{\eps}\Delta_W
\Big(\|u\|_{L^2(P)}^2+\|v\|_{L^2(Q)}^2\Big)^{1/2}
\nonumber\\
&\le
\frac{2L}{\eps}\Delta_W\,\|w\|_{L^2(P\otimes Q)}
\le \frac{2L}{\eps}\Delta_W.
\label{eq:l2-marginal-bound}
\end{align}

Collecting \eqref{eq:l2-epsilon-bound}, \eqref{eq:l2-cost-bound}, and
\eqref{eq:l2-marginal-bound}, we obtain
\[
|\langle {\rm D}\Phi(h'),w\rangle_{\Hil}|
\le
\frac{1}{\eps}\big(2L\,\Delta_W+\|c-c'\|_{L^2(P\otimes Q)}+|\eps-\eps'|\big)
=
\frac{\Delta}{\eps}.
\]
Since \(w\) was arbitrary in the unit ball of \(\Hil\),
\eqref{eq:dual-gradient-norm} yields
\begin{equation}\label{eq:l2-gradient-bound}
\|{\rm D}\Phi(h')\|_{L^2(P\otimes Q)}
\le
\frac{\Delta}{\eps}.
\end{equation}

\medskip
\noindent\emph{Step 2: Proof of (i)--(iii).}
Applying the error bound from \cref{thm:dual-error-bound} to \(h'\), and using
\eqref{eq:l2-gradient-bound}, we obtain~\eqref{eq:l2-local-error-bound},
completing the proof of~(i). To show (ii), let \(\delta>0\) and assume the
left-hand side of \eqref{eq:l2-linfty-modulus} holds. Set
\[
\theta:=h-h',
\qquad
m:=\|\theta\|_{L^\infty(\Omega_P\times\Omega_Q)},
\]
and suppose for contradiction that \(m\ge \delta\). The function
\(\theta\) is continuous on the compact set \(\Omega_P\times\Omega_Q\), so there
exists \((x_0,y_0)\in\Omega_P\times\Omega_Q\) with \(|\theta(x_0,y_0)|=m\).
Replacing \(\theta\) by \(-\theta\) if necessary, we may assume that
\[
\theta(x_0,y_0)=m.
\]
Let
\[
\varrho:=\frac{\delta}{8L}.
\]
For all \(x\in B_\varrho(x_0)\cap\Omega_P\) and
\(y\in B_\varrho(y_0)\cap\Omega_Q\), the \(L\)-Lipschitz continuity of
\(f,f',g,g'\) yields
\begin{align*}
\theta(x,y)
&\ge
\theta(x_0,y_0)-|f(x)-f(x_0)|-|f'(x)-f'(x_0)|
-|g(y)-g(y_0)|-|g'(y)-g'(y_0)|\\
&\ge
m-2L\|x-x_0\|-2L\|y-y_0\|
\ge
m-\frac{\delta}{2}
\ge
\frac{m}{2}.
\end{align*}
Therefore,
\begin{align*}
\|\theta\|_{L^2(P\otimes Q)}^2
&\ge
\frac{m^2}{4}
(P\otimes Q)\big((B_\varrho(x_0)\cap\Omega_P)\times(B_\varrho(y_0)\cap\Omega_Q)\big)
\\
&=
\frac{m^2}{4}
P\big(B_\varrho(x_0)\big)Q\big(B_\varrho(y_0)\big)
\\
&\ge
\frac{m^2}{4}
\delta_P\min(\varrho^d,1)
\inf_{y\in\Omega_Q}Q\big(B_\varrho(y)\big)
=
\frac{\vartheta_\delta}{4}m^2,
\end{align*}
or equivalently,
\begin{equation}\label{eq:l2-supnorm-bootstrap}
m\le \frac{2}{\sqrt{\vartheta_\delta}}\,
\|\theta\|_{L^2(P\otimes Q)}.
\end{equation}

On the other hand, \eqref{eq:l2-local-error-bound} gives
\begin{equation}\label{eq:l2-error-bound-reused}
\|\theta\|_{L^2(P\otimes Q)}
\le
\frac{\gamma_\eps}{\eps}\max(m,\eps)\,\Delta.
\end{equation}
Moreover, distinguishing the cases \(\delta\ge \eps\) and \(\delta\le \eps\),
we see that \(m\ge \delta\) implies
\begin{equation}\label{eq:l2-max-reduction}
\max(m,\eps)\le \frac{\eps}{\min(\delta,\eps)}\,m.
\end{equation}
Combining \eqref{eq:l2-supnorm-bootstrap}, \eqref{eq:l2-error-bound-reused}, and
\eqref{eq:l2-max-reduction}, we obtain
\[
\|\theta\|_{L^2(P\otimes Q)}
\le
\frac{2\gamma_\eps}{\min(\delta,\eps)\sqrt{\vartheta_\delta}}
\,\Delta\,\|\theta\|_{L^2(P\otimes Q)}.
\]
As \(\|\theta\|_{L^2(P\otimes Q)}>0\), this contradicts the left-hand side of
\eqref{eq:l2-linfty-modulus}. Therefore,
\(m<\delta\), completing the proof of~(ii). Finally, (iii) follows by taking
\(\delta=\eps\) in (ii) and applying~(i).
\end{proof}

\medskip

We easily derive the symmetric \(L^2\)-stability statement.
\begin{proof}[Proof of \cref{cor:symmetric-l2-stability}]
Applying \cref{thm:l2-stability}(iii) to \((P,Q,c,\eps)\) and \((P',Q',c',\eps')\) yields \eqref{eq:symmetric-l2-bound-unprimed} and \eqref{eq:symmetric-l2-bound-primed}, respectively. It follows that
\[
\|h-h'\|_{L^2(\bar\mu)}^2
\le
\frac{\overline\gamma^2}{2}\bigl(\Delta^2+(\Delta')^2\bigr).
\]
It is elementary to show that 
$
\frac{1}{2}\bigl(\Delta^2+(\Delta')^2\bigr)
\le
\bar\Delta^2,
$ so that \eqref{eq:symmetric-l2-bound-mixture} follows.
\end{proof}

\MNg{
\begin{proof}[Proof of \cref{cor:symmetric-l2-stability}]
We first apply \cref{thm:l2-stability}(iii) with reference datum
\((P,Q,c,\eps)\). For this application, we choose the structural constants
\[
\lambda_P:=\underline\lambda,
\qquad
\Lambda_P:=\overline\Lambda,
\qquad
\delta_P:=\underline\delta.
\]
This is legitimate by \cref{def:admissible-class}(d)--(e). Moreover, by
\cref{def:admissible-class}(a), (c), (f),
\begin{align*}
\diam(\Omega_P)&\le D,
\\
\inf_{y\in\Omega_Q}Q\Big(B_{\eps/(8L)}(y)\Big)
&\ge
\inf_{y\in\Omega_Q}Q\Big(B_{\underline\eps/(8L)}(y)\Big)
\ge \underline q.
\end{align*}
Hence the constants in \cref{thm:l2-stability} satisfy
\[
\gamma_\eps\le \overline\gamma,
\qquad
\vartheta_\eps\ge \underline\vartheta.
\]
Therefore,
\[
\Delta<\overline\eta
=
\frac{\underline\eps\sqrt{\underline\vartheta}}{2\overline\gamma}
\le
\frac{\eps\sqrt{\vartheta_\eps}}{2\gamma_\eps},
\]
and \cref{thm:l2-stability}(iii) yields
\[
\|h-h'\|_{L^2(P\otimes Q)}
\le
\gamma_\eps\,\Delta
\le
\overline\gamma\,\Delta.
\]
This proves \eqref{eq:symmetric-l2-bound-unprimed}.

Interchanging primed and unprimed quantities and repeating the same argument
gives \eqref{eq:symmetric-l2-bound-primed}.

Finally,
\[
\|h-h'\|_{L^2(\bar\mu)}^2
=
\frac12\|h-h'\|_{L^2(P\otimes Q)}^2
+
\frac12\|h-h'\|_{L^2(P'\otimes Q')}^2,
\]
so that \eqref{eq:symmetric-l2-bound-unprimed} and
\eqref{eq:symmetric-l2-bound-primed} imply
\[
\|h-h'\|_{L^2(\bar\mu)}^2
\le
\frac{\overline\gamma^2}{2}\bigl(\Delta^2+(\Delta')^2\bigr).
\]
Writing
\[
A:=2L\,\Delta_W+|\eps-\eps'|,
\qquad
a:=\|c-c'\|_{L^2(P\otimes Q)},
\qquad
b:=\|c-c'\|_{L^2(P'\otimes Q')},
\]
we have
\[
\Delta=A+a,
\qquad
\Delta'=A+b,
\qquad
\bar\Delta=A+\Big(\frac{a^2+b^2}{2}\Big)^{1/2}.
\]
Since
\[
\frac{\Delta^2+(\Delta')^2}{2}
=
A^2+A(a+b)+\frac{a^2+b^2}{2}
\le
A^2+2A\Big(\frac{a^2+b^2}{2}\Big)^{1/2}+\frac{a^2+b^2}{2}
=
\bar\Delta^2,
\]
it follows that
\[
\|h-h'\|_{L^2(\bar\mu)}
\le
\overline\gamma\,\bar\Delta,
\]
which is \eqref{eq:symmetric-l2-bound-mixture}.
\end{proof}
}
\subsection{Proofs for \cref{sec:Linfty-stability}}

We can now upgrade the \(L^2\)-estimate to an \(L^\infty\)-estimate for the potentials.

\begin{proof}[Proof of \cref{thm:linfty-stability}]
The positivity of \(\widehat q_\eps,\vartheta_\eps\) is stated in
\cref{rem:ball-measure-lower-bounds}, and
\(\widehat\kappa_\eps>0\) is immediate. Hence, \(\widehat\eta_\eps>0\). Again, by \cref{prop:qot-basic-properties},
\begin{equation}\label{eq:perturbed-potential-foc-linfty}
\begin{cases}
\displaystyle \int (f'(x)+g'(y)-c'(x,y))_+\,dQ'(y)=\eps'
\quad \text{for all }x\in\X,\\[1.2ex]
\displaystyle \int (f'(x)+g'(y)-c'(x,y))_+\,dP'(x)=\eps'
\quad \text{for all }y\in\Y.
\end{cases}
\end{equation}
Clearly, \eqref{eq:linfty-h-bound} follows from \eqref{eq:linfty-f-bound} and \eqref{eq:linfty-g-bound}. Steps 1--3 below prove \eqref{eq:linfty-f-bound}, and then Step~4 explains how a variant of those arguments proves \eqref{eq:linfty-g-bound}.

\smallskip
\noindent
\emph{Step 1: \(L^2\)-control.}
As
$
\|c-c'\|_{L^2(P\otimes Q)}
\le
\|c-c'\|_{L^\infty(\X\times\Y)},
$
the condition \eqref{eq:linfty-smallness} implies that the quantity $\Delta$ of
\cref{thm:l2-stability} satisfies 
$
\Delta \leq \Delta_*<\frac{\eps\sqrt{\vartheta_\eps}}{2\gamma_\eps},
$
and therefore \cref{thm:l2-stability} yields
\begin{equation}\label{eq:linfty-l2-input-bound}
\|h-h'\|_{L^2(P\otimes Q)}\le \gamma_\eps\,\Delta\le \gamma_\eps\,\Delta_*.
\end{equation}

Next, fix the additive constant in \((f',g')\). By changing \((f',g')\) to $(f'+a,g'-a)$ for $a:=\int (g'-g)\,dQ$, we may assume without loss of generality that
\[
  \int (g'-g)\,dQ=0.
\]
Set
\[
u:=f-f',
\qquad
v:=g-g'.
\]
Then
$
\int v\,dQ=0,
$
and so \eqref{eq:centered-decomp-norm} yields
\begin{align*}
\|h-h'\|_{L^2(P\otimes Q)}^2
&=
\|u\|_{L^2(P)}^2+\|v\|_{L^2(Q)}^2.
\end{align*}
As a consequence, \eqref{eq:linfty-l2-input-bound} implies
\begin{equation}\label{eq:linfty-l2-component-bounds}
\|u\|_{L^2(P)}\le \gamma_\eps\,\Delta_*,
\qquad
\|v\|_{L^2(Q)}\le \gamma_\eps\,\Delta_*.
\end{equation}

\smallskip
\noindent
\emph{Step 2: Slope estimate.}
Fix \(x\in\X\), and define
\[
F_x(a):=\int (a+g(y)-c(x,y))_+\,dQ(y),
\qquad a\in\R.
\]
By the first identity in the first-order condition~\eqref{eq:potential-foc},
\[
F_x(f(x))=\eps.
\]
Since \(y\mapsto (f(x)+g(y)-c(x,y))_+\) is continuous on the compact set
\(\Omega_Q\), there exists
\[
y(x)\in\argmax_{y\in\Omega_Q}(f(x)+g(y)-c(x,y))_+.
\]
Using \eqref{eq:potential-foc} once more, we have
\[
(f(x)+g(y(x))-c(x,y(x)))_+
\ge
\int (f(x)+g(y)-c(x,y))_+\,dQ(y)
=
\eps,
\]
and hence
\[
f(x)+g(y(x))-c(x,y(x))\ge \eps.
\]
Now let \(a,b\in[f(x)-\eps/2,f(x)+\eps/2]\) with \(a\le b\), and let
\(y\in B_{\eps/(4L)}(y(x))\cap\Omega_Q\). Since the map
\(y\mapsto g(y)-c(x,y)\) is \(2L\)-Lipschitz, we obtain
\[
a+g(y)-c(x,y)
\ge
f(x)-\frac{\eps}{2}+g(y(x))-c(x,y(x))-2L\|y-y(x)\|
\ge
\eps-\frac{\eps}{2}-\frac{\eps}{2}
=0.
\]
Therefore,
\[
(b+g(y)-c(x,y))_+-(a+g(y)-c(x,y))_+=b-a
\]
for all such \(y\). As the left-hand side is clearly nonnegative for any $y\in\Y$, we deduce that
\[
(b+g(y)-c(x,y))_+-(a+g(y)-c(x,y))_+ \geq (b-a) \1_{B_{\eps/(4L)}(y(x))\cap\Omega_Q}(y)
\]
for all $y\in\Y$. Integrating against \(Q(dy)\) and recalling $Q(B_{\eps/(4L)}(y(x)))\geq \widehat q_\eps$, we conclude
\begin{equation}\label{eq:fx-local-monotonicity}
F_x(b)-F_x(a)\ge \widehat q_\eps\,(b-a)
\qquad
\text{for all }a\le b\text{ in }[f(x)-\eps/2,f(x)+\eps/2].
\end{equation}
We claim that this implies
\begin{equation}\label{eq:fx-local-slope}
|F_x(a)-\eps|\ge \widehat q_\eps |a-f(x)|
\qquad\text{whenever }|a-f(x)|\le \eps/2.
\end{equation}
Indeed, if \(a\le f(x)\), then \eqref{eq:fx-local-monotonicity} with
\((a,b)=(a,f(x))\) yields
\[
\eps-F_x(a)=F_x(f(x))-F_x(a)\ge \widehat q_\eps(f(x)-a),
\]
whereas if  \(a\ge f(x)\), then \eqref{eq:fx-local-monotonicity} with
\((a,b)=(f(x),a)\) yields
\[
F_x(a)-\eps=F_x(a)-F_x(f(x))\ge \widehat q_\eps(a-f(x)).
\]

\smallskip
\noindent
\emph{Step 3: The bound for $f-f'$.}
Fix \(x\in\X\). The first identity in
\eqref{eq:perturbed-potential-foc-linfty} yields
\begin{align}
|F_x(f'(x))-\eps|
&\le
\left|
\int \Big[(f'(x)+g(y)-c(x,y))_+
-(f'(x)+g'(y)-c(x,y))_+\Big]\,dQ(y)
\right|
\nonumber\\
&\quad
+
\left|
\int \Big[(f'(x)+g'(y)-c(x,y))_+
-(f'(x)+g'(y)-c'(x,y))_+\Big]\,dQ(y)
\right|
\nonumber\\
&\quad
+
\left|
\int (f'(x)+g'(y)-c'(x,y))_+\,d(Q-Q')(y)
\right|
+
|\eps-\eps'|.
\label{eq:fx-residual-decomposition}
\end{align}
Since \(t\mapsto t_+\) is \(1\)-Lipschitz, the first term in \eqref{eq:fx-residual-decomposition} is bounded by
\[
\int |g(y)-g'(y)|\,dQ(y)\le \|v\|_{L^2(Q)}.
\]
The second term is bounded by
\[
\int |c(x,y)-c'(x,y)|\,dQ(y)\le \|c-c'\|_{L^\infty(\X\times\Y)}
\]
as \(x\in\X\) and \(\Omega_Q\subset\Y\). Finally,
\[
y\mapsto (f'(x)+g'(y)-c'(x,y))_+
\]
is \(2L\)-Lipschitz on \(\Y\), so the third term is bounded by
\(2L\,W_1(Q,Q')\). Using \eqref{eq:linfty-l2-component-bounds}, we obtain
\begin{equation}\label{eq:fx-residual-bound}
|F_x(f'(x))-\eps|
\le
(1+\gamma_\eps)\Delta_*<\frac{\eps\,\widehat q_\eps}{2},
\end{equation}
where the last inequality is due to \eqref{eq:linfty-smallness} and \eqref{eq:linfty-local-threshold}.

We claim that~\eqref{eq:fx-residual-bound} implies \(|f'(x)-f(x)|<\eps/2\). Indeed, if
\(f'(x)\ge f(x)+\eps/2\), then the monotonicity of \(F_x\), together with
\eqref{eq:fx-local-monotonicity}, yields
\[
F_x(f'(x))-\eps
\ge
F_x(f(x)+\eps/2)-F_x(f(x))
\ge
\frac{\eps\,\widehat q_\eps}{2},
\]
contradicting~\eqref{eq:fx-residual-bound}. If
\(f'(x)\le f(x)-\eps/2\), we instead have
\[
\eps - F_x(f'(x))
\ge
F_x(f(x))-F_x(f(x)-\eps/2)
\ge
\frac{\eps\,\widehat q_\eps}{2},
\]
again contradicting~\eqref{eq:fx-residual-bound}. Hence
\[
|f'(x)-f(x)|<\frac{\eps}{2}.
\]
Now \eqref{eq:fx-local-slope} applies with $a=f'(x)$, and using also the first inequality of~\eqref{eq:fx-residual-bound}, we have
\[
\widehat q_\eps\,|f'(x)-f(x)|
\le
|F_x(f'(x))-\eps|
\le
(1+\gamma_\eps)\Delta_*.
\]
Since \(x\in\X\) was arbitrary, we conclude that
\begin{equation*}
\|u\|_{L^\infty(\X)}\le (1+\gamma_\eps)\widehat q_\eps^{-1}\,\Delta_*,
\end{equation*}
completing the proof of \eqref{eq:linfty-f-bound}.

\smallskip
\noindent
\emph{Step 4: The bound for $g-g'$.}
Fix \(y\in\Y\), and define
\[
G_y(b):=\int (f(x)+b-c(x,y))_+\,dP(x),
\qquad b\in\R.
\]
Arguing as in Step 2, but now integrating against $P(dx)$ and using the constant $\widehat\kappa_\eps$, we obtain
\begin{equation*}
G_y(b)-G_y(a)\ge \widehat\kappa_\eps\,(b-a)
\qquad
\text{for all }a\le b\text{ in }[g(y)-\eps/2,g(y)+\eps/2],
\end{equation*}
and therefore
\begin{equation}\label{eq:gy-local-slope}
|G_y(b)-\eps|\ge \widehat\kappa_\eps |b-g(y)|
\qquad\text{whenever }|b-g(y)|\le \eps/2.
\end{equation}
Arguing as in Step~3, we further see that 
\begin{equation*}
|G_y(g'(y))-\eps|
\le
(1+\gamma_\eps)\Delta_*<\frac{\eps\,\widehat\kappa_\eps}{2},
\end{equation*}
which is analogous to~\eqref{eq:fx-residual-bound} but has $\widehat\kappa_\eps$ on the right-hand side. 
As before, this implies first that
$
|g'(y)-g(y)|<\frac{\eps}{2},
$
and then, by \eqref{eq:gy-local-slope},
\[
\widehat\kappa_\eps\,|g'(y)-g(y)|
\le
|G_y(g'(y))-\eps|
\le
(1+\gamma_\eps)\Delta_*.
\]
Since \(y\in\Y\) was arbitrary, we conclude that
\begin{equation*}
\|v\|_{L^\infty(\Y)}\le (1+\gamma_\eps)\widehat\kappa_\eps^{-1}\,\Delta_*,
\end{equation*}
proving \eqref{eq:linfty-g-bound}. 
\end{proof}

Finally, we make the constants uniform over the admissible class.

\begin{proof}[Proof of \cref{cor:solution-map-lipschitz}]
This follows directly from \cref{thm:linfty-stability} and the  definitions of the constants in \cref{cor:solution-map-lipschitz}.
\end{proof}
\MNg{
\begin{proof}[Proof of \cref{cor:solution-map-lipschitz}] 
Fix \((P,Q,c,\eps),(P',Q',c',\eps')\in\mathfrak D\), and abbreviate
\[
h:=h_{(P,Q,c,\eps)},
\qquad
h':=h_{(P',Q',c',\eps')}.
\]
We apply \cref{thm:linfty-stability} with
localization sets \(\X\) and \(\Y\).

For the reference datum \((P,Q,c,\eps)\), we choose in that theorem the
structural constants
\[
\lambda_P:=\underline\lambda,\qquad
\Lambda_P:=\overline\Lambda,\qquad
\delta_P:=\underline\delta.
\]
This is legitimate by assumptions (d) and (e). Moreover, by assumptions
(a), (c), (f), we have
\begin{align*}
\diam(\Omega_P)&\le D,
\\
\inf_{y\in\Omega_Q}Q\!\Big(B_{\eps/(8L)}(y)\Big)
&\ge
\inf_{y\in\Omega_Q}Q\!\Big(B_{\underline\eps/(8L)}(y)\Big)
\ge \underline q,
\\
\inf_{y\in\Omega_Q}Q\!\Big(B_{\eps/(4L)}(y)\Big)
&\ge
\inf_{y\in\Omega_Q}Q\!\Big(B_{\underline\eps/(8L)}(y)\Big)
\ge \underline q.
\end{align*}
Hence the constants in \cref{thm:linfty-stability}
satisfy
\begin{align}
\gamma_\eps
&=
16\left(
\delta_P^{-1}\max\Big(\frac{8L}{\eps},1\Big)^d
\right)
\frac{\Lambda_P^2}{\lambda_P^2}
\frac{\bigl(\lceil 8L\,{\rm diam}(\Omega_P)/\eps\rceil\bigr)^{d+2}}
{\inf_{y\in\Omega_Q} Q(B_{\eps/(8L)}(y))}
\le \overline\gamma,
\label{eq:uniform-gamma-upper}
\\
\vartheta_\eps
&=
\delta_P
\min\Big\{\Big(\frac{\eps}{8L}\Big)^d,1\Big\}
\inf_{y\in\Omega_Q}Q\!\Big(B_{\eps/(8L)}(y)\Big)
\ge \underline\vartheta,
\nonumber\\
\widehat q_\eps
&=
\inf_{y\in\Omega_Q}Q\!\Big(B_{\eps/(4L)}(y)\Big)
\ge \underline q,
\label{eq:uniform-qhat-lower}
\\
\widehat\kappa_\eps
&=
\delta_P\min\Big\{\Big(\frac{\eps}{4L}\Big)^d,1\Big\}
\ge \underline{\widehat\kappa}.
\label{eq:uniform-kappahat-lower}
\end{align}

Write
\begin{equation}\label{eq:solution-map-delta-star}
\Delta_*:=
d_{\mathfrak D}\big((P,Q,c,\eps),(P',Q',c',\eps')\big),
\end{equation}
as in \cref{thm:linfty-stability}. 
Assume that
\[
d_{\mathfrak D}\big((P,Q,c,\eps),(P',Q',c',\eps')\big)<\overline\eta_*.
\]
Then, by \cref{eq:uniform-linfty-constants},
\begin{align*}
\Delta_*
&<
\frac{\underline\eps\sqrt{\underline\vartheta}}{2\overline\gamma}
\le
\frac{\eps\sqrt{\vartheta_\eps}}{2\gamma_\eps},
\\
(1+\gamma_\eps)\Delta_*
&<
(1+\overline\gamma)\overline\eta_*
\le
\frac{\underline\eps\,\underline q}{2}
\le
\frac{\eps\,\widehat q_\eps}{2},
\\
(1+\gamma_\eps)\Delta_*
&<
(1+\overline\gamma)\overline\eta_*
\le
\frac{\underline\eps\,\underline{\widehat\kappa}}{2}
\le
\frac{\eps\,\widehat\kappa_\eps}{2}.
\end{align*}
Hence the assumptions of
\cref{thm:linfty-stability} are satisfied, and we
obtain
\[
\|h-h'\|_{L^\infty(\X\times\Y)}
\le
(1+\gamma_\eps)\bigl(\widehat q_\eps^{-1}+\widehat\kappa_\eps^{-1}\bigr)\Delta_*.
\]
Using \cref{eq:uniform-qhat-lower},
\eqref{eq:uniform-kappahat-lower}, \eqref{eq:uniform-gamma-upper}, and
\eqref{eq:solution-map-delta-star}, it follows that
\begin{align*}
\|h-h'\|_{L^\infty(\X\times\Y)}
\le
(1+\overline\gamma)\bigl(\underline q^{-1}+\underline{\widehat\kappa}^{-1}\bigr)
\,d_{\mathfrak D}\big((P,Q,c,\eps),(P',Q',c',\eps')\big),
\end{align*}
which is \cref{eq:uniform-linfty-lipschitz}. This proves the
corollary.
\end{proof}
}

\subsection{Proofs for \cref{sec:primal-stability}}

Next, we transfer the stability of potentials to stability of the primal optimizers.

\begin{proof}[Proof of \cref{thm:primal-stability}]
By \cref{prop:qot-basic-properties}(v),
\begin{equation}
\label{eq:primal-density-representation}
d\pi=\zeta\,d\mu,
\qquad
d\pi'=\zeta'\,d\mu'.
\end{equation}

\smallskip
\noindent
\emph{Step 1: Stability of the density.}
By \cref{cor:symmetric-l2-stability},
\begin{equation}
\label{eq:primal-potential-l2-bound}
\|h-h'\|_{L^2(\bar\mu)}
\le
\overline\gamma\,\bar\Delta.
\end{equation}
Moreover, \cref{prop:qot-basic-properties}(iv), applied to \((P,Q,c,\eps)\) and \((P',Q',c',\eps')\) with localization sets \(\Omega_P\cup\Omega_{P'}\) and \(\Omega_Q\cup\Omega_{Q'}\), yields
\[
-5\|c\|_{L^\infty(\Gamma)}+\eps
\le
h
\le
5\|c\|_{L^\infty(\Gamma)}+\eps,
\qquad
-5\|c'\|_{L^\infty(\Gamma)}+\eps'
\le
h'
\le
5\|c'\|_{L^\infty(\Gamma)}+\eps'
\]
on \(\Gamma\). Hence
\[
0\le (h-c)_+\le \eps A,
\qquad
0\le (h'-c')_+\le \eps' A'
\qquad\text{on }\Gamma.
\]

Using
\[
\zeta-\zeta'
=
\frac{(h-c)_+-(h'-c')_+}{\eps}
+
\Big(\frac{1}{\eps}-\frac{1}{\eps'}\Big)(h'-c')_+,
\]
the fact that \(t\mapsto t_+\) is \(1\)-Lipschitz, and \eqref{eq:primal-potential-l2-bound}, we obtain
\[
\|\zeta-\zeta'\|_{L^2(\bar\mu)}
\le
\frac{\|h-h'\|_{L^2(\bar\mu)}+\|c-c'\|_{L^2(\bar\mu)}}{\eps}
+
\frac{A'}{\eps}|\eps-\eps'|
\le
\frac{\overline\gamma\,\bar\Delta+\|c-c'\|_{L^2(\bar\mu)}+A'|\eps-\eps'|}{\eps}.
\]
Similarly,
\[
\zeta-\zeta'
=
\frac{(h-c)_+-(h'-c')_+}{\eps'}
+
\Big(\frac{1}{\eps}-\frac{1}{\eps'}\Big)(h-c)_+,
\]
and therefore
\[
\|\zeta-\zeta'\|_{L^2(\bar\mu)}
\le
\frac{\overline\gamma\,\bar\Delta+\|c-c'\|_{L^2(\bar\mu)}+A|\eps-\eps'|}{\eps'}.
\]
Taking the minimum of the two bounds yields \eqref{eq:primal-density-l2-bound}.

\smallskip
\noindent
\emph{Step 2: Control of the reference measures.} Writing \[
\mu-\mu'
=
(P-P')\otimes Q + P'\otimes(Q-Q'),
\]
one sees that 
\begin{equation}
\label{eq:product-tv-bound}
\|\mu-\mu'\|_{\mathrm{TV}}
\le
\|P-P'\|_{\mathrm{TV}} + \|Q-Q'\|_{\mathrm{TV}}
=
\Delta_{\mathrm{TV}}.
\end{equation}

Moreover, if \(\gamma_P\in\Pi(P,P')\) and \(\gamma_Q\in\Pi(Q,Q')\) are optimal couplings for \(W_1(P,P')\) and \(W_1(Q,Q')\), then \(\gamma_P\otimes\gamma_Q\) is a coupling of \(\mu\) and \(\mu'\). Therefore,
\begin{align}
W_1(\mu,\mu')
&\le
\int \|(x,y)-(x',y')\|\,d(\gamma_P\otimes\gamma_Q)(x,x',y,y')
\nonumber\\
&\le
\int \|x-x'\|\,d\gamma_P(x,x')
+
\int \|y-y'\|\,d\gamma_Q(y,y')
\nonumber\\
&=
W_1(P,P')+W_1(Q,Q')
\le
\sqrt2\,\Delta_W.
\label{eq:product-w1-bound}
\end{align}

\smallskip
\noindent
\emph{Step 3: TV-stability.}
Let
\[
u:=\frac{d\mu}{d\bar\mu},
\qquad
u':=\frac{d\mu'}{d\bar\mu}.
\]
Then \(u+u'=2\) \(\bar\mu\)-a.s., and by \eqref{eq:primal-density-representation},
\[
\frac{d\pi}{d\bar\mu}=\zeta\,u,
\qquad
\frac{d\pi'}{d\bar\mu}=\zeta'\,u'.
\]
Using \(u+u'=2\), we obtain
\begin{equation}
\label{eq:primal-density-radon-decomposition}
\frac{d\pi}{d\bar\mu}-\frac{d\pi'}{d\bar\mu}
=
\zeta u-\zeta' u'
=
(\zeta-\zeta')+\frac{\zeta+\zeta'}{2}(u-u').
\end{equation}
Hence,
\begin{align*}
2\|\pi-\pi'\|_{\mathrm{TV}}
&=
\left\|\frac{d\pi}{d\bar\mu}-\frac{d\pi'}{d\bar\mu}\right\|_{L^1(\bar\mu)}
\le
\|\zeta-\zeta'\|_{L^1(\bar\mu)}
+
\frac12\int (\zeta+\zeta')\,d|\mu-\mu'|.
\end{align*}
Since \(\bar\mu\) is a probability measure, \eqref{eq:primal-density-l2-bound} yields
\[
\|\zeta-\zeta'\|_{L^1(\bar\mu)}\le \hat\Delta.
\]
Moreover, by the definitions of \(A\) and \(A'\),
\[
0\le \zeta\le A,
\qquad
0\le \zeta'\le A'
\qquad\text{on }\Gamma,
\]
and \(\supp|\mu-\mu'|\subset \Theta\subset \Gamma\). Therefore,
\[
\frac12\int (\zeta+\zeta')\,d|\mu-\mu'|
\le
\frac{A+A'}{2}\,|\mu-\mu'|(\R^d\times\R^d)
=
(A+A')\|\mu-\mu'\|_{\mathrm{TV}}.
\]
Combining the preceding estimates with \eqref{eq:product-tv-bound}, we obtain
\[
2\|\pi-\pi'\|_{\mathrm{TV}}
\le
\hat\Delta+(A+A')\Delta_{\mathrm{TV}},
\]
which is \eqref{eq:primal-tv-bound}.

\smallskip
\noindent
\emph{Step 4: $W_1$-stability.}
Let \(\varphi:\Theta\to\R\) be \(1\)-Lipschitz. As subtracting a constant from \(\varphi\) does not change \(\int \varphi\,d(\pi-\pi')\), we may assume that
\[
\|\varphi\|_{L^\infty(\Theta)}\le \frac{D_*}{2}.
\]
Using \eqref{eq:primal-density-radon-decomposition}, we find
\begin{align*}
\int \varphi\,d(\pi-\pi')
&=
\int \varphi\,(\zeta-\zeta')\,d\bar\mu
+
\frac12\int \varphi\,(\zeta+\zeta')\,d(\mu-\mu').
\end{align*}
For the first term, \eqref{eq:primal-density-l2-bound} gives
\begin{align*}
\left|\int \varphi\,(\zeta-\zeta')\,d\bar\mu\right|
&\le
\frac{D_*}{2}\int |\zeta-\zeta'|\,d\bar\mu
\\
&\le
\frac{D_*}{2}\|\zeta-\zeta'\|_{L^2(\bar\mu)}
\le
\frac{D_*\,\hat\Delta}{2}.
\end{align*}
For the second term, define
\[
\psi:=\frac12\,\varphi\,(\zeta+\zeta')
\qquad\text{on }\Theta.
\]
Since \(\|\zeta+\zeta'\|_{L^\infty(\Theta)}\le A+A'\), and since \(h,h'\) are \(\sqrt2L\)-Lipschitz while \(c,c'\) are \(L\)-Lipschitz, we have
\[
\Lip(\zeta)
\le
\frac{(\sqrt2+1)L}{\eps},
\qquad
\Lip(\zeta')
\le
\frac{(\sqrt2+1)L}{\eps'}.
\]
Consequently,
\begin{align*}
\Lip(\psi)
&\le
\frac12\Lip(\varphi)\,\|\zeta+\zeta'\|_{L^\infty(\Theta)}
+
\frac12\|\varphi\|_{L^\infty(\Theta)}\,\Lip(\zeta+\zeta')
\\
&\le
\frac{A+A'}{2}
+
\frac{(\sqrt2+1)L\,D_*}{4}\Big(\frac{1}{\eps}+\frac{1}{\eps'}\Big).
\end{align*}
By Kantorovich--Rubinstein duality on the compact set \(\Theta\), and using \eqref{eq:product-w1-bound},
\begin{align*}
\left|\frac12\int \varphi\,(\zeta+\zeta')\,d(\mu-\mu')\right|
&=
\left|\int \psi\,d(\mu-\mu')\right|
\\
&\le
\Lip(\psi)\,W_1(\mu,\mu')
\\
&\le
\sqrt2\left[
\frac{A+A'}{2}
+
\frac{(\sqrt2+1)L\,D_*}{4}\Big(\frac{1}{\eps}+\frac{1}{\eps'}\Big)
\right]\Delta_W.
\end{align*}
Collecting the bounds for the two terms, we conclude that
\[
\left|\int \varphi\,d(\pi-\pi')\right|
\le
\frac{D_*\,\hat\Delta}{2}
+
\sqrt2\left[
\frac{A+A'}{2}
+
\frac{(\sqrt2+1)L\,D_*}{4}\Big(\frac{1}{\eps}+\frac{1}{\eps'}\Big)
\right]\Delta_W.
\]
Taking the supremum over all \(1\)-Lipschitz \(\varphi\) proves \eqref{eq:primal-w1-bound}.
\qedhere
\end{proof}

\MNg{
\begin{proof}[Proof of \cref{rem:primal-density-linfty-stability}]
Since \(\Gamma\subset \X\times\Y\), \cref{cor:solution-map-lipschitz} yields
\begin{equation}
\label{eq:primal-potential-linfty-bound}
\|h-h'\|_{L^\infty(\Gamma)}
\le
\|h-h'\|_{L^\infty(\X\times\Y)}
\le
\overline C\,\Delta_*.
\end{equation}

Next, \cref{prop:qot-basic-properties}(iv), applied to \((P,Q,c,\eps)\) with localization sets \(\Omega_P\cup\Omega_{P'}\) and \(\Omega_Q\cup\Omega_{Q'}\), yields
\[
-5\|c\|_{L^\infty(\Gamma)}+\eps
\le
h
\le
5\|c\|_{L^\infty(\Gamma)}+\eps
\qquad\text{on }\Gamma.
\]
Hence
\[
0\le (h-c)_+\le 6\|c\|_{L^\infty(\Gamma)}+\eps=\eps A
\qquad\text{on }\Gamma.
\]
Similarly,
\[
0\le (h'-c')_+\le 6\|c'\|_{L^\infty(\Gamma)}+\eps'=\eps' A'
\qquad\text{on }\Gamma.
\]

Using
\[
\zeta-\zeta'
=
\frac{(h-c)_+-(h'-c')_+}{\eps}
+
\Big(\frac{1}{\eps}-\frac{1}{\eps'}\Big)(h'-c')_+,
\]
the fact that \(t\mapsto t_+\) is \(1\)-Lipschitz, and \eqref{eq:primal-potential-linfty-bound}, we obtain
\[
\|\zeta-\zeta'\|_{L^\infty(\Gamma)}
\le
\frac{\|h-h'\|_{L^\infty(\Gamma)}+\|c-c'\|_{L^\infty(\Gamma)}}{\eps}
+
\frac{A'}{\eps}|\eps-\eps'|
\le
\frac{\overline C\,\Delta_*+\|c-c'\|_{L^\infty(\Gamma)}+A'|\eps-\eps'|}{\eps}.
\]
Similarly,
\[
\zeta-\zeta'
=
\frac{(h-c)_+-(h'-c')_+}{\eps'}
+
\Big(\frac{1}{\eps}-\frac{1}{\eps'}\Big)(h-c)_+,
\]
and therefore
\[
\|\zeta-\zeta'\|_{L^\infty(\Gamma)}
\le
\frac{\overline C\,\Delta_*+\|c-c'\|_{L^\infty(\Gamma)}+A|\eps-\eps'|}{\eps'}.
\]
Taking the minimum of the two bounds yields the claim.
\end{proof}
}

\subsection{Proofs for \cref{sec:support-stability}}

Next, we prove the Hausdorff stability bound for the optimal support.

\begin{proof}[Proof of \cref{thm:support-stability}]
We have \(\|h-h'\|_{L^\infty(\X\times\Y)} \le\overline C\,\Delta_*\) by \cref{cor:solution-map-lipschitz}, so that
\begin{equation}
\label{eq:slack-uniform-bound}
\|\sigma-\sigma'\|_{L^\infty(\X\times\Y)}
\le
\|h-h'\|_{L^\infty(\X\times\Y)}+\|c-c'\|_{L^\infty(\X\times\Y)}
\le
\delta_*.
\end{equation}

\smallskip
\noindent 
\emph{Step 1: Level set stability.} We first show, without requiring~\eqref{eq:exterior-nondegeneracy}, that
\begin{align}
\label{eq:support-sandwich}
\overline{\{z\in\Omega_{P\otimes Q}\cap\Omega_{P'\otimes Q'}:\sigma(z)>\delta_*\}}
&\subset
\Sigma'
\subset
\{z\in\Omega_{P'\otimes Q'}:\sigma(z)\ge -\delta_*\}
\end{align}
and
\begin{equation}
\label{eq:support-symmetric-difference}
\Sigma\triangle\Sigma'
\subset
\Big(
\{z\in\Omega_{P\otimes Q}\cap\Omega_{P'\otimes Q'}:|\sigma(z)|\le \delta_*,\ |\sigma'(z)|\le \delta_*\}
\Big)
\cup
(\Omega_{P\otimes Q}\triangle\Omega_{P'\otimes Q'}).
\end{equation}
Indeed, by \eqref{eq:optimal-support},
\[
\Sigma=\overline{\{z\in\Omega_{P\otimes Q}:\sigma(z)>0\}},
\qquad
\Sigma'=\overline{\{z\in\Omega_{P'\otimes Q'}:\sigma'(z)>0\}}.
\]
If \(z\in\Omega_{P\otimes Q}\cap\Omega_{P'\otimes Q'}\) satisfies \(\sigma(z)>\delta_*\), then \eqref{eq:slack-uniform-bound} yields
\[
\sigma'(z)\ge \sigma(z)-\delta_*>0,
\]
so \(z\in\Sigma'\). Passing to closures gives the first inclusion in \eqref{eq:support-sandwich}.
 Conversely, let \(z\in\Sigma'\). If \(\sigma(z)<-\delta_*\), then \eqref{eq:slack-uniform-bound} implies \(\sigma'(z)<0\), which by the continuity of \(\sigma'\) contradicts \(z\in\Sigma'\). Therefore \(\sigma(z)\ge -\delta_*\), which proves the second inclusion in \eqref{eq:support-sandwich}.

Now let \(z\in\Sigma\triangle\Sigma'\). If \(z\in\Omega_{P\otimes Q}\triangle\Omega_{P'\otimes Q'}\), then \(z\) already belongs to the right-hand side of \eqref{eq:support-symmetric-difference}. If \(z\notin\Omega_{P\otimes Q}\triangle\Omega_{P'\otimes Q'}\), then \(z\in\Omega_{P\otimes Q}\cap\Omega_{P'\otimes Q'}\). In that case, \(z\in\Sigma\triangle\Sigma'\) and a similar argument as above shows that \(|\sigma(z)|\le\delta_*\) and \(|\sigma'(z)|\le\delta_*\), proving \eqref{eq:support-symmetric-difference}.

\smallskip
\noindent
\emph{Step 2: Hausdorff stability.}
First note that
\begin{equation*}
\label{eq:product-support-hausdorff}
d_H(\Omega_{P\otimes Q},\Omega_{P'\otimes Q'})\le \Delta_\Omega.
\end{equation*}
Indeed, if \(z'=(x',y')\in\Omega_{P'\otimes Q'}\), we can choose
\(x\in\Omega_P\) and \(y\in\Omega_Q\) with
\(\|x-x'\|\le d_H(\Omega_P,\Omega_{P'})\) and
\(\|y-y'\|\le d_H(\Omega_Q,\Omega_{Q'})\). Then
\[
\|(x,y)-z'\|
\le
\Big(d_H(\Omega_P,\Omega_{P'})^2+d_H(\Omega_Q,\Omega_{Q'})^2\Big)^{1/2}
=
\Delta_\Omega,
\]
and the reverse inclusion is analogous.

Let \(z'=(x',y')\in\Sigma'\). Since \(z'\in\Omega_{P'\otimes Q'}\), we can choose \(z=(x,y)\in\Omega_{P\otimes Q}\) such that
\begin{equation}
\label{eq:nearest-product-support-point}
\|z-z'\|\le \Delta_\Omega.
\end{equation}
By the second inclusion in \eqref{eq:support-sandwich},
\[
\sigma(z')\ge -\delta_*.
\]
Moreover, since \(h=f\oplus g\) is \(\sqrt2L\)-Lipschitz and \(c\) is \(L\)-Lipschitz, \(\sigma=h-c\) is \((\sqrt2+1)L\)-Lipschitz. Hence
\[
\sigma(z)
\ge
\sigma(z')-(\sqrt2+1)L\|z-z'\|
\ge
-\delta_*-(\sqrt2+1)L\Delta_\Omega.
\]
If \(z\notin\Sigma\), then \eqref{eq:exterior-nondegeneracy} yields
\[
-a\,\dist(z,\Sigma)\ge \sigma(z)\ge -\delta_*-(\sqrt2+1)L\Delta_\Omega,
\]
that is,
\[
\dist(z,\Sigma)\le \frac{\delta_*+(\sqrt2+1)L\Delta_\Omega}{a}.
\]
If \(z\in\Sigma\), the same estimate is trivial. Combining this with \eqref{eq:nearest-product-support-point}, we obtain
\[
\dist(z',\Sigma)
\le
\|z'-z\|+\dist(z,\Sigma)
\le
\Delta_\Omega+\frac{\delta_*+(\sqrt2+1)L\Delta_\Omega}{a}.
\]
Therefore
\[
\sup_{z'\in\Sigma'}\dist(z',\Sigma)
\le
\left(1+\frac{(\sqrt2+1)L}{a}\right)\Delta_\Omega+\frac{\delta_*}{a}.
\]
The analogue holds when primed and unprimed quantities are exchanged, and~\eqref{eq:support-hausdorff-bound} follows.
\end{proof}

Finally, we verify the nondegeneracy condition under the concavity hypothesis, which holds in particular for the quadratic cost.

\begin{proof}[Proof of \cref{pr:concavity-exterior-nondegeneracy}]
If \(D_P=0\), then \(\Omega_P=\{x_0\}\). The second identity in \eqref{eq:potential-foc} gives \(\sigma(x_0,y)_+=\eps\) for all \(y\in\Omega_Q\), hence \(\sigma(x_0,y)=\eps\) for all \(y\in\Omega_Q\). By \eqref{eq:optimal-support}, \(\Omega_{P\otimes Q}\subset\Sigma\), so \(\Omega_{P\otimes Q}\setminus\Sigma=\emptyset\). We may therefore assume that \(D_P>0\).

Fix \(z=(x,y)\in\Omega_{P\otimes Q}\setminus\Sigma\). By the first-order condition \eqref{eq:potential-foc}, there exists \(x_*\in\Omega_P\) such that
\[
\sigma(x_*,y)\ge \eps.
\]

Define
\[
S_y:=\{\tilde{x}\in\Omega_P:\sigma(\tilde{x},y)\ge 0\}.
\]
Since \(x\mapsto \sigma(x,y)\) is continuous and concave on the convex set \(\Omega_P\), the set \(S_y\) is nonempty, closed, and convex. Moreover,
\begin{equation}
\label{eq:fiber-section-in-support}
S_y\times\{y\}\subset\Sigma.
\end{equation}
Indeed, if \(\tilde{x}\in S_y\) and \(\sigma(\tilde{x},y)>0\), then \((\tilde{x},y)\in\Sigma\) by \cref{eq:optimal-support}. If \(\sigma(\tilde{x},y)=0\), then for every \(t\in(0,1)\),
\[
\sigma\big((1-t)\tilde{x}+t x_*,y\big)
\ge
(1-t)\sigma(\tilde{x},y)+t\sigma(x_*,y)
\ge
t\eps>0,
\]
where we used concavity of $\sigma(\cdot,y)$ and convexity of \(\Omega_P\). Hence
\[
\big((1-t)\tilde{x}+t x_*,y\big)\in\Sigma
\qquad\text{for all }t\in(0,1),
\]
and letting \(t\downarrow0\) yields \((\tilde{x},y)\in\Sigma\), showing~\eqref{eq:fiber-section-in-support}.

Let $z=(x,y)\in \Omega_{P\otimes Q}\setminus\Sigma$. Then~\eqref{eq:fiber-section-in-support} implies that \(x\notin S_y\). By continuity of \(\tilde{x}\mapsto \sigma(\tilde{x},y)\) along the segment from \(x\) to \(x_*\), there exists \(\lambda\in(0,1)\) such that
\[
x_0:=(1-\lambda)x+\lambda x_* \in S_y
\qquad\text{and}\qquad
\sigma(x_0,y)=0.
\]
Using concavity once more, we obtain
\[
0=\sigma(x_0,y)
\ge
(1-\lambda)\sigma(x,y)+\lambda\sigma(x_*,y)
\ge
(1-\lambda)\sigma(x,y)+\lambda\eps,
\]
and therefore
\[
-\sigma(x,y)\ge \frac{\lambda}{1-\lambda}\eps\ge \lambda\eps.
\]
On the other hand,
\[
\dist(x,S_y)\le \|x-x_0\|=\lambda\|x-x_*\|\le \lambda D_P.
\]
Combining the last two displays yields
\[
-\sigma(x,y)\ge \frac{\eps}{D_P}\,\dist(x,S_y).
\]
Noting that $\dist((x,y),\Sigma)\le \dist(x,S_y)$ by \eqref{eq:fiber-section-in-support}, the claim~\eqref{eq:concavity-exterior-nondegeneracy} follows.

For the quadratic cost $c(x,y)=\frac12\|x-y\|^2$, it is known that $\sigma(x,y)$ is concave in each component, so that~\eqref{eq:fiberwise-concavity} holds. Specifically, $\sigma(x,y)=\langle x,y\rangle - \varphi(x)-\psi(y)$ where $\varphi,\psi$ are convex functions; see \cite{WieselXu.24} or \cite{GonzalezSanzNutz2024.Scalar}.
\end{proof}

\printbibliography

\end{document}